\def \rr {\mathbb{R}}
\def \crit{2^{\star}(s)}
\def \crito{2^{\star}}
\def \vv {\mathbb{\Vert}}	
\def \rn {\mathbb{R}^n}
\def \nn {\mathbb{N}}
\def \bb {\hbox}
\def \ep {\epsilon}
\def \ga {\tilde{g}_{\alpha}}
\def \h{H_1^2(M)}
\def \k  {\kappa_{n,s}}
\def \dl{\delta}
\def \kl{K_{\delta}}
\def \la{\lambda}
\def \ld{\lambda_{\delta}}
\def \zd{Z_{\delta}}
\def \ph{\varphi}
\def \px{\Pi_{\kdo^{\perp}}}
\def \non{\nonumber}
\def \cda{\chi_{\al}}
\def \da{\delta_{\alpha}}
\def \pa{\phi_{\alpha}}
\def \vra{\varphi_{\alpha}}
\def \ka{K_{\delta_{\alpha}}^{\perp}}
\def \al{\alpha}
\def \hs{H_1^2}
\def \hsm{H_1^2(M)}
\def \wk{\rightharpoonup}
\def \pta{\widetilde{\phi}_{\alpha}}
\def \eo{\exp_{x_0}}
\def \fp{F^{\,\prime\prime}}
\def \r {\rangle}
\def \l{\langle}
\def \nb{\nabla}
\def \p{\phi_{\delta}}
\def \dg{\Delta_g}
\def \ud{U_{\dl,x_0}}
\def \pr{\partial}	
\def \u{U_{\dl}}
\def \kdo{K_{\dl}}
\def \ldo{L_{\dl,h}}
\def \n{N_{\dl}}
\def \kg{\mathcal{K}_{g}^{n,s}}
\def \fg{\mathcal{F}_g}
\def \sg{\operatorname{Scal}_g}
\def \ks{\Lambda_{n,s}}
\def \eps {\epsilon}
\def \rd {R_\delta}
\def \ricg{\operatorname{Ric}_g}
\def \rmg{\operatorname{Rm}_g}
\def \dundeux{D_1^2(\rn)}
\newtheorem{theo}{Theorem}[section]
\newtheorem{claim}{Claim}[section]
\newtheorem{prop}{Proposition}[section]
\newtheorem{step}{Step}[section]
\title[Ground-state blowing-up solutions for a Hardy-Sobolev equation]{Ground-state blowing-up solutions for a Hardy-Sobolev equation on a manifold}
\author{Hussein Cheikh Ali}
\address{Laboratoire Paul Painlev\'e, Universit\'e de Lille, Cit\'e Scientifique - B\^atiment M2 59655 Villeneuve d'Ascq Cedex France}
\email{frederic.cheikh-ali@univ-lille.fr}
\author{Frédéric Robert}
\address{Institut \'Elie Cartan, Universit\'e de Lorraine, CNRS, IECL, F-54000 Nancy, France}
\email{frederic.robert@univ-lorraine.fr}
\date{July 16th 2025}
\begin{document}
\begin{abstract} We prove the existence of blowing-up families of solutions to an equation of Hardy-Sobolev type in high dimensions. These families are of minimal type. The sole condition is that the potential of the linear operator touches a critical potential at the singular point. This condition is sharp as shown by the first author in \cite{HCA:pjm}.
\end{abstract}
\maketitle
	\section{Introduction}
Let $(M,g)$ be a compact Riemannian manifold of dimension $n\geq 3$ without boundary, $x_0\in M$, $0<s<2$ and $h\in C^0(M)$. We consider solutions $u\in H_1^2(M)\cap C^0(M)$ to the Hardy-Sobolev equation
\begin{equation}\label{equa:HS1}
	 	\left\{\begin{array}{ll}
	 		\Delta_g u +h  u = \frac{u^{\crit -1}}{d_g(x_0,x)^s}& \hbox{  in }M   \\
	 		u > 0  &\hbox{ in }M,
	 	\end{array}\right.
	 \end{equation}
where $\Delta_g:=-\hbox{div}_g(\nabla)$  is the Laplace-Beltrami operator with minus sign convention and $\crit:=\dfrac{2(n-s)}{n-2}$. The space $H_1^2(M)$ be the completion of $C^\infty(M)$ for the norm $\Vert \cdot\Vert_{H_1^2}:=\Vert\nabla \cdot\Vert_2+\Vert \cdot\Vert_2$. The weak formulation of \eqref{equa:HS1} makes sense due to the critical Sobolev embedding  $H^{2}_{1}(M) \hookrightarrow L^{\crit}(M, d_{g}(x_0, \cdot)^{-s})$, see Jaber \cites{JaberNATM,jaber:jmaa}. Our first result is the existence of families of solutions to Hardy-Sobolev equations that blow-up along a standard bubble:
	
\begin{theo}\label{th:2} Let $(M,g)$ be a compact Riemannian manifold of dimension $n\geq 7$. Let $h_0\in C^p(M)$, $p\geq 2$, be such that $\Delta_g+h_0$ is coercive and assume that
	\begin{equation}\label{def:c}
	h_0(x_0)=c_{n,s}\sg(x_0),\hbox{ where }c_{n,s}:=\frac{(n-2)(6-s)}{12(2n-2-s)},
	\end{equation}
	where $\sg$ is the scalar curvature of $(M,g)$. Then there exists $(h_k)_{k\in\nn}\in C^p(M)$ such that $\lim_{k\to +\infty }h_k=h_0$ in $C^p(M)$ and there exists $(u_k)_k\in H_1^2(M)\cap C^0(M)$ such that
	$$\Delta_g u_k+h_k u_k=\frac{u_k^{\crit-1}}{d_g(x,x_0)^s}\, , \, u_k>0\hbox{ in }M$$
and $u_k=Bubble+o(1)$ in $H_1^2(M)$ as $k\to +\infty$, where the Bubble is defined in \eqref{def:Udx_0}.
\end{theo}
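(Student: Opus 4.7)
The plan is to construct $(h_k, u_k)$ via a Lyapunov--Schmidt finite-dimensional reduction in which the potential $h$ itself is treated as an adjustable parameter. For each small scale $\delta > 0$, rather than fixing $h=h_0$ and hoping to find a matching scale, I solve the infinite-dimensional projected equation for every nearby $h$, and then choose $h=h_\delta$ so that the remaining one-dimensional (Lagrange-multiplier) equation is satisfied. Letting $\delta=\delta_k\to 0$ produces the desired sequence $h_k:=h_{\delta_k}$ and $u_k:=\ud+\phi_{\delta_k,h_k}$, with $h_k\to h_0$ in $C^p(M)$ and $u_k\to\mathrm{Bubble}$ in $\hsm$.

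Let $U$ be the Euclidean ground-state Hardy-Sobolev profile on $\rn$ with singularity at $0$; set $U_\delta(y):=\delta^{-(n-2)/2}U(y/\delta)$ and transplant to $M$ by a cutoff composed with $\exp_{x_0}^{-1}$ to obtain $\ud$ as in \eqref{def:Udx_0}. Linearization at $\ud$ gives
\[
\ldo\varphi=\dg\varphi+h\varphi-(\crit-1)\frac{\ud^{\crit-2}\varphi}{d_g(x_0,\cdot)^s},
\]
whose approximate kernel $\kdo$ is one-dimensional, spanned by $\partial_\delta\ud$ (no translation modes arise since $x_0$ is fixed as the singular point). Coercivity of $\dg+h_0$ together with non-degeneracy of the Euclidean bubble yields uniform invertibility of $\ldo$ on $\kdo^\perp$ for $\delta$ small and $h$ close to $h_0$ in $C^p(M)$; a standard contraction-mapping argument then solves the projected problem
\[
\px\!\left(\dg(\ud+\phi)+h(\ud+\phi)-\frac{(\ud+\phi)^{\crit-1}}{d_g(x_0,\cdot)^s}\right)=0
\]
for $\phi=\phi_{\delta,h}\in\kdo^\perp$, with $\|\phi_{\delta,h}\|_{\hsm}\to 0$ as $(\delta,\|h-h_0\|_{C^p})\to(0,0)$, controlled quantitatively by the size of the residual $\px(\dg\ud+h\ud-\ud^{\crit-1}/d_g(x_0,\cdot)^s)$.

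The remaining obstruction is the Lagrange multiplier
\[
\Lambda(\delta,h):=\Bigl\langle\dg(\ud+\phi_{\delta,h})+h(\ud+\phi_{\delta,h})-\frac{(\ud+\phi_{\delta,h})^{\crit-1}}{d_g(x_0,\cdot)^s},\,\zd\Bigr\rangle,
\]
where $\zd$ is a normalized generator of $\kdo$. A careful test-function expansion in normal coordinates at $x_0$, using the explicit profile of $U$ and the Riemannian metric expansion $g_{ij}(y)=\delta_{ij}-\tfrac13 R_{ikjl}(x_0)y^ky^l+\dots$, will give an asymptotic of the form
\[
\Lambda(\delta,h)=\Psi(\delta)\bigl(h(x_0)-c_{n,s}\sg(x_0)\bigr)+o(\Psi(\delta))\quad\text{as }\delta\to 0,
\]
for an explicit nonvanishing function $\Psi(\delta)$. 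The hypothesis $n\geq 7$ is what guarantees that the local scalar-curvature contribution dominates the remainder and that no global mass-type term appears; it is in this expansion that the specific constant $c_{n,s}$ surfaces.

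By \eqref{def:c} one has $\Lambda(\delta,h_0)=o(\Psi(\delta))$, so the leading obstruction vanishes. Fixing a $C^p$-bump $\eta$ with $\eta(x_0)=1$, the partial derivative of $\Lambda(\delta,\cdot)$ in the direction $\eta$ is, to leading order, a nonzero multiple of $\Psi(\delta)$; the implicit function theorem then produces, for each small $\delta$, a unique $t(\delta)=o(1)$ such that $h_\delta:=h_0+t(\delta)\eta$ satisfies $\Lambda(\delta,h_\delta)=0$, and $\|h_\delta-h_0\|_{C^p}=o(1)$. Choosing any $\delta_k\to 0$ and setting $h_k:=h_{\delta_k}$, $u_k:=U_{\delta_k,x_0}+\phi_{\delta_k,h_k}$ gives the conclusion: positivity of $u_k$ follows from $\|\phi\|_{\hsm}\ll\|\ud\|_{\hsm}$ together with an elliptic bootstrap, and $u_k=\mathrm{Bubble}+o(1)$ in $\hsm$ is immediate. \textbf{Main obstacle.} The hardest step is the third: pushing the expansion of $\Lambda(\delta,h)$ to order $o(\Psi(\delta))$, since the singular Hardy-Sobolev weight $d_g(x_0,\cdot)^{-s}$ interacts nontrivially with both the metric expansion at $x_0$ and the nonlinear correction $\phi_{\delta,h}$, and the precise cancellation that isolates the factor $(h(x_0)-c_{n,s}\sg(x_0))$ must be extracted from this computation.
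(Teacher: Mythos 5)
Your proposal is a valid strategy and it reaches the stated conclusion, but it is genuinely different from the paper's route. The paper exploits the variational structure: after the reduction it expands the reduced energy $\delta\mapsto J_h(\ud+\phi_\delta)$ to order $\delta^4$, which forces a delicate analysis of the non-negligible remainder term $\int_M R_\delta B_\delta\,dv_g$ (the non-local quantity $\int W\hat C(W)$ entering $L_g(h_0,x_0)$), then finds a critical point in the rescaled variable $t=\delta/\sqrt{\eps}$ of the limit polynomial $\tfrac12 f(x_0)t^2\int U_1^2+L_g(h_0,x_0)t^4$; Theorem \ref{th:2} is deduced from this sharper statement, with the degenerate case $L_g(h_0,x_0)=0$ handled by the extra perturbation $h_0+\tfrac1k d_g(\cdot,x_0)^2\chi$, which shifts $\Delta_g h_0(x_0)$ and hence $L_g$ without moving $h_0(x_0)$. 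You instead solve the one-dimensional bifurcation equation $\Lambda(\delta,h)=0$ non-variationally by tuning $h$ for each fixed $\delta$. This buys a real simplification: you only need the leading order of $\Lambda$, namely $\Lambda(\delta,h)=\delta^2\big(h(x_0)-c_{n,s}\sg(x_0)\big)\int U_1^2+o(\delta^2)$ (the identity $\tfrac{1}{3n}\int r\partial_rU_1\,Z_0\,dX=-c_{n,s}\int U_1Z_0\,dX$ with $\int U_1Z_0=\int U_1^2>0$ does hold), so the entire fourth-order expansion, the constant $L_g(h_0,x_0)$, and the case distinction on its sign disappear. The price is that your argument yields only Theorem \ref{th:2} (the perturbation $h_\delta$ is produced by an existence argument, not prescribed), whereas the paper's computation also gives the finer Theorem \ref{th:1}.

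Two steps in your sketch need more care than you give them. First, the contribution of $\phi_{\delta,h}$ to $\Lambda$ is a priori of the same order $\delta^2$ as the leading term, since $\|\phi_{\delta,h}\|_{H_1^2}=O(\delta^2)$ and the linear term $(\crit-1)\int_M \ud^{\crit-2}d_g(x_0,\cdot)^{-s}\phi_{\delta,h}Z_\delta\,dv_g$ does not vanish identically; you must show it is $o(\delta^2)$, e.g.\ by combining the orthogonality $\langle\phi_{\delta,h},Z_\delta\rangle_h=0$ with the fact that $Z_\delta$ solves the linearized equation up to an error that is $O(\delta^2)$ in $L^{2n/(n+2)}$ — this is checkable but is not automatic, and is precisely the kind of interaction the paper's Section 5 is built to control. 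Second, the implicit function theorem cannot be applied as stated because the $t$-derivative of $\Lambda(\delta,h_0+t\eta)$ degenerates like $\Psi(\delta)\to 0$; either divide by $\Psi(\delta)$ and verify $C^1$-dependence of $\phi_{\delta,h}$ on $t$ uniformly in $\delta$, or (simpler) fix $\rho>0$, observe that $\Lambda(\delta,h_0\pm\rho\eta)$ have opposite signs for $\delta$ small, apply the intermediate value theorem, and conclude $h_{\delta_k}\to h_0$ by a diagonal extraction in $\rho=1/k$, which is exactly the amount of convergence Theorem \ref{th:2} requires.
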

The condition on the scalar curvature  is optimal. Indeed, the first author has proved in \cite{HCA:pjm} that if a family blows-up with the profile of a bubble with $n\geq 7$, then $h_0(x_0)=c_{n,s}\sg(x_0)$. The statement is  more precise when a specific geometric quantity does not vanish:
\begin{theo}\label{th:1} Let $(M,g)$ be a compact Riemannian manifold of dimension $n\geq 7$. Let $h_0\in C^p(M)$, $p\geq 2$, be such that $\Delta_g+h_0$ is coercive and 
	$$h_0(x_0)=c_{n,s}\sg(x_0).$$
	Assume that $L_g(h_0,x_0)\neq 0$ where $L_g(h_0,x_0)$ is defined in \eqref{def:Lg:intro}. Let $f\in C^p(M)$ be such that $f(x_0)L_g(h_0,x_0)< 0$.  
	Then there exists $(u_\eps)_{\eps>0}\in H_1^2(M)\cap C^0(M)$ such that
	$$\Delta_g u_\eps+(h_0+\eps f) u_\eps=\frac{u_\eps^{\crit-1}}{d_g(x,x_0)^s}\, , \, u_\eps>0\hbox{ in }M$$
and $u_\eps=Bubble+o(1)$ in $H_1^2(M)$ as $\eps\to 0$, where the Bubble is defined in \eqref{def:Udx_0}.
\end{theo}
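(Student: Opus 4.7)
The plan is to perform a Lyapunov--Schmidt finite-dimensional reduction. I would look for a solution of the form $u_\eps = \ud + \phi_{\delta,\eps}$, where $\ud$ is the standard bubble defined in \eqref{def:Udx_0} (possibly corrected by an explicit geometric term to raise the order of the residual), $\delta > 0$ is a concentration parameter to be chosen as a function of $\eps$, and $\phi_{\delta,\eps}$ is a small remainder lying in a suitable complement of the approximate kernel. Because the singular point $x_0$ is \emph{fixed} by the Hardy weight $d_g(x_0,\cdot)^{-s}$, the only surviving invariance of the Euclidean limiting profile is dilation; hence the approximate kernel $\kdo$ is one-dimensional, spanned by $\partial_\delta \ud$.

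The first step is to establish uniform coercivity of the linearized operator
\[
\ldo \phi = \dg \phi + h\phi - (\crit - 1)\frac{\ud^{\crit - 2}}{d_g(x_0,x)^s}\phi
\]
on $\kdo^{\perp}$ for $\delta$ small, using coercivity of $\dg + h_0$ (which persists under the small perturbation $\eps f$) together with a Bianchi--Egnell type spectral argument and the nondegeneracy of the positive Euclidean bubble in $\dundeux$. A standard contraction mapping on $\kdo^{\perp}$ then produces $\phi_{\delta,\eps}$, smoothly dependent on $(\delta,\eps)$, with sharp $\hsm$-bounds in terms of the residual $\dg \ud + (h_0+\eps f)\ud - \ud^{\crit-1}/d_g(x,x_0)^s$.

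The remaining task is the finite-dimensional reduction: find $\delta = \delta(\eps)$ such that the projection onto $\kdo$ vanishes, equivalently such that $\partial_\delta J_\eps(\delta) = 0$, where $J_\eps(\delta) := \fg(\ud + \phi_{\delta,\eps})$ is the energy with potential $h_0 + \eps f$. The hypothesis $h_0(x_0) = c_{n,s}\sg(x_0)$ is \emph{exactly} the condition that annihilates the leading $\delta^2$-correction in the expansion of $J_\eps$, i.e.\ the Aubin--Schoen-type obstruction in this Hardy--Sobolev setting. After this cancellation, the refined expansion should take the schematic form
\[
\partial_\delta J_\eps(\delta) \;=\; A\, L_g(h_0,x_0)\, \delta^{\gamma - 1} \;+\; B\, \eps\, f(x_0)\, \delta^{\gamma' - 1} \;+\; \text{l.o.t.},
\]
for specific positive exponents $\gamma, \gamma'$ and explicit positive constants $A, B$. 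The sign hypothesis $f(x_0)\, L_g(h_0,x_0) < 0$ then guarantees that, upon setting $\delta = t\,\eps^{\beta}$ for an appropriate balancing exponent $\beta > 0$, the function $t \mapsto \partial_\delta J_\eps(t\eps^\beta)$ changes sign on a compact range of $t$; the intermediate value theorem yields the critical point. Positivity of $u_\eps$ then follows from the smallness of $\phi_{\delta,\eps}$, elliptic regularity, and the maximum principle.

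The hard part will be producing the sharp asymptotic expansion of $J_\eps$ to the order that \emph{survives} the $\delta^2$ cancellation. This is expected to require (i) refining the ansatz $\ud$ by an explicit geometric corrector that absorbs the subleading curvature terms (analogous to the conformal/mass corrector used in Yamabe-type constructions, and the reason $n \geq 7$ enters), and (ii) pointwise and weighted integral bounds on $\phi_{\delta,\eps}$ that go well beyond the $\hs(M)$-norm, in order to prove that the cross terms $\langle \phi_{\delta,\eps}, \cdot \rangle$ in the expansion of $J_\eps$ are truly negligible compared to the $L_g(h_0,x_0)$-contribution at the critical scale $\delta \sim \eps^{\beta}$. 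Once these two technical inputs are in place, the construction closes and delivers the family $(u_\eps)$ as stated.
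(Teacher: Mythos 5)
Your overall architecture coincides with the paper's: a one-dimensional Lyapunov--Schmidt reduction around the standard bubble, invertibility of the linearization on $\kdo^{\perp}$ via nondegeneracy of $U_1$, the scaling $\delta=t\sqrt{\eps}$, and the sign condition $f(x_0)L_g(h_0,x_0)<0$ producing a stable critical point of the reduced energy $t\mapsto \tfrac12 f(x_0)t^2\int U_1^2+L_g(h_0,x_0)t^4$. Up to that point the proposal is sound.

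The genuine gap is in your treatment of the order-$\delta^4$ term, i.e.\ precisely in the identification of $L_g(h_0,x_0)$. You assert that the cross terms involving $\phi_{\delta,\eps}$ can be shown to be ``truly negligible compared to the $L_g(h_0,x_0)$-contribution.'' With the standard bubble this is false: one has $\Vert\phi_\delta\Vert_{H_1^2}=O(\Vert\rd\Vert_{2n/(n+2)})=O(\delta^2)$ and $J_h'(\ud)(\phi_\delta)=-\int_M \rd\,\phi_\delta\,dv_g$, which is of size $\delta^2\cdot\delta^2=\delta^4$ --- exactly the order at which $L_g(h_0,x_0)$ enters once the $\delta^2$ coefficient vanishes. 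This term cannot be discarded; it must be computed. The paper does this by rescaling the leading part $B_\delta$ of $\phi_\delta$, extracting a weak limit $\hat B\in\dundeux\cap K_0^\perp$ solving a linear equation with right-hand side built from $W_{s,g,x_0}$, and showing that $-\tfrac12\int_M\rd B_\delta\,dv_g=-\tfrac12\delta^4\int_{\rn}W_{s,g,x_0}\hat C(W_{s,g,x_0})\,dX+o(\delta^4)$; this is the source of the nonlocal term in the definition \eqref{def:Lg:intro}, which your argument never produces. The alternative you gesture at in item (i) --- correcting the ansatz so that the new remainder is negligible, as in Esposito--Pistoia --- is viable, but then the corrector is essentially $\delta^2\hat C(W_{s,g,x_0})$ rescaled, and its contribution to the explicit energy expansion is again that same nonlocal term; you would still have to construct it, control it in the singular Hardy weight, and verify the resulting constant agrees with $L_g(h_0,x_0)$. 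As written, your proposal simultaneously claims a corrected ansatz and negligible cross terms without computing the corrector's energy, so the coefficient of $\delta^4$ is never actually identified, and the reduced problem cannot be closed. (A minor additional point: your intermediate value argument on $\partial_\delta J_\eps$ requires uniform $C^1$ control in $t$; it is simpler to use, as the paper does, uniform $C^0$ convergence of the reduced energy on compact sets of $t$ together with the stability of strict local extrema.)
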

The potential $c_{n,s}\sg(x_0)$ first appeared in Jaber \cite{JaberNATM} where it is proved that the condition $h(x_0)<c_{n,s}\sg(x_0)$ permits to obtain minimizing solutions to \eqref{equa:HS1}. Chen \cite{chen} proved that when $h(x_0)>c_{n,s}\sg(x_0)$, there are  families $(u_\eps)_\eps$ of solutions to \eqref{equa:HS1} where the exponent $\crit-1$ is replaced by $\crit-1-\eps$ and where $u_\eps$ blows-up with a bubble profile as $\eps\to 0^+$. Regarding blow-up with several bubbles on a manifold, we refer to CheikhAli-Mazumdar \cite{HCA:SM}.

\smallskip\noindent The quantity $L_g(h_0,x_0)$ depends on the local geometry at the point $x_0$, but also on $s\in (0,2)$, and varies with $s$. Therefore, although it is intrinsic, it does not seem to have any geometric meaning.

\medskip\noindent In the context of a bounded domain of $\rn$, there has been an abundance of studies of Hardy-Sobolev equations. For equations like \eqref{equa:HS1} with the singularity interior to the domain, we refer to Ruiz-Willem \cite{RW} and the recent Ghoussoub-Mazumdar-Robert \cite{gmr:jde} for high energies. It is also possible to add a Hardy potential in the linear operator, which has been studied in Esposito-Ghoussoub-Pistoia-Vaira \cite{egpv} and Ghoussoub-Mazumdar-Robert \cite{gmr:jde}.

\medskip\noindent In order to prove Theorems \ref{th:2} and \ref{th:1}, we use the finite-dimensional reduction. We follow the method developed in Robert-V\'etois \cite{RV:imrn} for the case $s=0$, see also Micheletti-Pistoia-V\'etois \cite{MPV}. For Hardy-Sobolev problems, the linearized problem is one dimensional, which permits to ignore the gradient of the potential. However, the singularity $d_g(\cdot, x_0)^{-s}$ breaks the conformal invariance of the equation on a manifold: this phenomenon does not happen on a domain of $\rn$ since the curvature vanishes. This is why, unlike in the case $s=0$, the classical Yamabe potential $\frac{n-2}{4(n-1)}\sg$ is not relevant in our context. A consequence is that the remainder that comes from the finite-dimensional reduction is not negligible in the energy expansion: we overcome this essential difficulty by performing a delicate analysis of the remainder. A similar phenomenon has been observed in Esposito-Pistoia \cite{EP} and Robert-V\'etois \cite{RV:calcvar}. Note that in \cite{EP} the approach is different: the model for the bubble is a perturbation of ours and the remainder is negligible. In our analysis, we keep the standard bubble and we make a precise estimate of the remainder. 

\medskip\noindent The problem we consider is naturally stated in dimension $n\geq 3$, and, according to CheikhAli \cite{HCA:pjm}, the equality $h_0(x_0)=c_{n,s}\sg(x_0)$ is expected to be the blow-up condition for $n\geq 4$. However, the dimensions $n=4,5,6$ carry some additional complexity. First, for $n=6$, the first term in the expression of $L_g(h_0,x_0)$ can be expressed in term of a log, but the final nonlocal term is implicit and not known, as in Esposito-Pistoia \cite{EP}. For $n=4,5$, one expects to introduce a notion of "mass" as in Robert-V\'etois \cite{RV:imrn}, but, as mentioned above, we cannot exploit the conformal invariance when $s>0$. 

\smallskip\noindent In all this paper, $C(a,b,...)$ will denote any constant depending only on the parameters $a,b,...$. The value might change from one line to the other.

\section{Definition of a bubble and $L_g(h_0,x_0)$}
We fix $0<r_0<i_g(M)/3$ where $i_g(M)>0$ is the injectivity radius of $(M,g)$. Let us choose a cutoff function $\chi \in C^{\infty}(\rr)$ such  that $\chi_{|(-\infty,r_0]}\equiv 1$ and $\chi_{|[2r_0,+\infty)}\equiv 0$. For any $\delta>0$, we define a  {\it bubble} as 
	\begin{eqnarray}\label{def:Udx_0}
 	\ud(x)	&=&\k\, \chi(d_{g}(x_0,x))\frac{\dl^{\frac{n-2}{2}}}{\left( \dl^{2-s}+d_g(x_0,x)^{2-s}\right)^{\frac{n-2}{2-s}} }
		\end{eqnarray}
		for all $x\in M$, where $\k:=\left( (n-s)(n-2)\right) ^{\frac{n-2}{2(2-s)}}$. In particular, we have that
$$\ud(x):=\chi(d_{g}(x_0,x)) U_\delta\left(  \exp_{x_0}^{-1}(x)\right)\hbox{ for all }x\in M$$
where
\begin{equation}\label{def:U0}
U_1(X):=\k\left(\frac{1}{1+|X|^{2-s}} \right)^{\frac{n-2}{2-s}}\hbox{ and }U_\delta(X):=\dl^{-\frac{n-2}{2}}U_1\left( \dl^{-1}X\right)
 \end{equation}
for all $X\in\rn$. It follows from Chou-Chu \cite{Chouchu} that the $U_\delta$'s, $\delta>0$, are the only nontrivial solutions $U\in \dundeux $ (the completion of $C_c^{\infty}(\rr^n)$ for $ \vv\nabla \cdot\vv_2$)   to
 \begin{equation}\label{eq:1}
	\Delta_{\xi}U=\frac{U^{\crit-1}}{|X|^s}, U\geq 0 \bb{ in } \rr^n,
\end{equation}
where $\xi$ is the Euclidean metric. Since $ \Delta_{\xi}U_{\dl}=\frac{U_{\dl}^{\crit-1}}{|X|^s}$ for all $\delta>0$, differentiating with respect to $\delta$ yields
 \begin{equation}\label{eq:Z0}
 \Delta_{\xi}Z_0 =\left(\crit -1 \right) \frac{U_1^{\crit-2}}{|X|^s}Z_0\hbox{ in }\dundeux\hbox{ where }Z_0:=\left( \partial_{\dl}U_{\dl}\right)_{|\delta=1}
 \end{equation}
It follows from Robert \cite{RANA} (see also Dancer-Gladiali-Grossi \cite{DGG}) that 
\begin{equation}\label{spanZ_0}
	K_0:=	\left\lbrace \varphi\in \dundeux: 	\Delta_{\xi}\varphi =\left(\crit -1 \right) \frac{U_1^{\crit-2}}{|X|^s}\varphi \, \bb{ in  } \rr^n\right\rbrace=\operatorname{Span}\{Z_0\}.
\end{equation}
We define
\begin{eqnarray}\label{def:Lg:intro}
L_g(h_0 ,x_0)&:=&-\frac{1}{4n}\left( \dg (h_0-\ks\sg)(x_0)-\kg(x_0)\right) \int_{\rr^n}|X|^4|\nabla U_1|^2\, dx\nonumber\\
&&-\frac{1}{2}  \int_{\rn} W_{s,g,x_0}\hat{C}(W_{s,g,x_0})\, dX 
\end{eqnarray}
with
\begin{equation}\label{def:k}
\kg(x_0):=\frac{\ks}{18}\Big(8|\ricg(x_0)|^2_g-3|\rmg(x_0)|^2_g -\frac{5(2-s)}{10-s}\sg(x_0)^2\Big)
\end{equation}
where $\ks:= \frac{(n-2)(10-s)}{20(2n-2-s)}$ and $\ricg$ and $\rmg$ are the Ricci and Riemann tensors. Note that for $s>0$, $\ks\neq c_{n,s}$, which is a difference with the nonsingular case $s=0$. The function $W_{s,g,x_0}$ is defined as
$$W_{s,g,x_0}:=c_{n,s}\sg (x_0)U_1+\frac{1}{3}R_{ij}(x_0)\sigma^i\sigma^j(r\partial_r U_1)\in L^{\frac{2n}{n+2}}(\rn)\hbox{ for }n\geq 7$$
where $R_{ij}$ are the coordinates of $\ricg$ in the exponential map at $x_0$, and $(r,\sigma)$ are polar coordinates. Finally, $\hat{C}=\hat{C}(W_{s,g,x_0})\in \dundeux$ is the unique solution to 
\begin{equation*}
\Delta_{\xi} \hat{C} -(\crit-1)\frac{U_1^{\crit-2}}{|X|^s}\hat{C} =-W_{s,g,x_0}+\frac{ \int_{\rn}W_{s,g,x_0} Z_0\, dx}{ \int_{\rn}|\nabla Z_0|^2\, dx} \Delta_{\xi}Z_0
\, ;\, \hat{C}\in K_0^\perp,\end{equation*}
where $Z_0$ is as in \eqref{eq:Z0}.

 \section{Preliminary notations} We fix $h_0\in C^2(M)$ such that $\Delta_g+h_0$ is coercive, that is the first eigenvalue $\lambda_1(\Delta_g+h_0)$ is positive. Given $h\in L^\infty(M)$, the space $H_1^2(M)$ is endowed with the bilinear form 
 	\begin{equation}\label{def:norm}
		\langle u, v\rangle_h:= \int_M\Big( (\nabla u,\nabla v)_g+huv\Big)\, dv_g \bb{ for all } u,v\in H_1^2(M).
	\end{equation}
	We let $\ep>0$ be such that for $h\in L^{\infty}(M)$ such that $\vv h-h_0\vv_{\infty}< \ep$. The coercivity of $\dg +h_0$ yields the coercivity of that the operator $\dg +h$  for $\ep<<1$, with a coercivity constant depending only on $ h_0 $ and $\ep<<1$. Therefore, $\langle \cdot,\cdot \rangle_h$ is positive definite and $\left(\hs,  \langle \cdot,\cdot \rangle_h\right) $ is a Hilbert space. 	We let $\vv \cdot \vv_h$ be the norm induced by $\l \cdot, \cdot\r_{h}$: this norm is equivalent to the standard norm $\Vert \cdot\Vert_{H_1^2}$. Solutions to \eqref{equa:HS1} are critical points of the functional 
 \begin{equation*}\label{Jhu}
 	J_h(u):=\frac{1}{2}\int_M\Big( |\nabla u|_g^2+hu^2\Big)\, dv_g-F(u)\bb{ for all } u\in \h,  
 \end{equation*} 
 where $dv_g$ is the Riemannian element of volume and\begin{equation*}
 	F(u):=\frac{1}{\crit} \int_M \frac{u_+^{\crit}}{d_g(x_0,x)^s} \, dv_g \bb{ for all } u\in H_1^2(M),
 \end{equation*} 
with  $u_+:=\max(u,0)$. Indeed, $F,J_h\in C^1(H_1^2(M))$ and 
\begin{equation*}
			J_h^{\prime}(u)[\phi]=\int_M\left( (\nabla u,\nabla \phi)_g +hu\phi\right)\, dv_g- \int_M \frac{u_+^{\crit-1}\phi}{d_g(x_0,x)^s} \, dv_g \bb{ for all } u,\phi\in \h.
		\end{equation*}
We define the  operator $(\Delta_g+h)^{-1}:\left(H_1^2(M)\right)^\prime\to H_1^2(M)$. In particular, for any $w\in L^{\frac{2n}{n+2}}(M)=(L^{\crito}(M))^\prime\subset (H_1^2(M))^\prime$ where $\crito:=\frac{2n}{n-2}$, the function $u=(\Delta_g+h)^{-1}w\in H_1^2(M)$ is the unique solution of $\left( \Delta_g+h\right)u=w$ in $M$. It follows from the continuity of the Sobolev embedding  $H_1^2(M)\hookrightarrow L^{\crito}(M)$, we get for $\eps<<1$ that
	\begin{equation*}
		\vv(\Delta_g+h)^{-1}w\vv_{\hs} \leq C(h_0,\ep)\,  \vv w \vv_{\frac{2n}{n+2}},
	\end{equation*}
where $C(h_0,\ep)>0$  depends only on $(M,g)$, $h_0\in L^{\infty}(M)$, and $0<\ep<<1$. As one checks, differentiating the bubble $\ud$ defined in \eqref{def:Udx_0}, we get
	\begin{equation}\label{prdlud}
		\pr_{\dl} \left( \ud(x)\right)  =\frac{n-2}{2}\k \chi(d_{g}(x_0,x))\dl^{\frac{n-4}{2}}\frac{d_g(x_0,x)^{2-s}-\dl^{2-s}}{\left(\dl^{2-s}+d_g(x_0,x)^{2-s} \right)^{\frac{n-s}{2-s}} }.
	\end{equation}
	For $\dl \in (0,+\infty)$, we define
\begin{equation*}
	\kdo:= \operatorname{Span} \left\lbrace \zd\right\rbrace,
\end{equation*}
where 
	\begin{eqnarray}\label{def:Zdlx_0}
		Z_{\dl}(x)&:=&\chi(d_{g}(x_0,x))\dl^{-\frac{n-2}{2}} Z_0(\dl^{-1}\exp_{x_0}^{-1}(x))\non\\
		&=&\frac{n-2}{2}\k\chi(d_{g}(x_0,x))\dl^{\frac{n-2}{2}}\frac{d_g(x_0,x)^{2-s}-\delta^{2-s}}{\left( \dl^{2-s}+d_g(x_0,x)^{2-s}\right)^{\frac{n-s}{2-s}} }\bb{ for all } x\in M.
	\end{eqnarray}
Let $	\kdo^{\perp}$ as the orthogonal of $\kdo$ in $H_1^2(M)$, defined as 
\begin{equation*}
	\kdo^{\perp}=\{\phi\in H_1^2(M) \, ; \langle \phi , Z_{\delta}\rangle_h=0\}
\end{equation*}
where  $\l \cdot,\cdot\r_h$ is defined in \eqref{def:norm}. We let $\Pi_{\kdo}: H_1^2(M)\mapsto H_1^2(M) $ and $\Pi_{\kdo^{\perp}}: H_1^2(M)\mapsto H_1^2(M)$ be  the orthogonal projection onto respectively $\kdo$ and $\kdo^{\perp}$ with respect to $\l\cdot,\cdot\r_h$.
	\section{The finite dimensional reduction}
The goal of this section is to prove the following:
\begin{theo}\label{maintheo1}
We fix $h_0\in L^{\infty}(M)$ such that $\dg+h_0$ is coercive. Then there exists $\ep >0$ such that for any $h\in L^{\infty}(M)$ satisfying $\vv h-h_0\vv_{\infty}< \ep$, there exists $\delta\mapsto \phi_{\delta}\in C^1\left( (0,\ep), \h\right) $ such that $u_{\dl}:=\ud+\phi_{\dl}$ is a critical point of $J_h$ if and only if $\dl\in (0,\ep)$ is a critical point of $\dl\mapsto J_h(u_{\dl})$ in $(0,\ep)$. Moreover, we have that  $	\vv \phi_{\dl}\vv_{H_1^2} \leq  C\vv \rd\vv_{\frac{2n}{n+2}}$, where $C$ is a constant depending on $(M,g)$, $h_0$ and $\ep$. Here, $\rd$ is defined as: 
		\begin{equation}\label{defreste}
			\rd:= \left( \dg+h\right)\ud -\frac{\ud^{\crit-1} }{d_g(x_0,x)^s}. 
		\end{equation}
		where the peak $\ud$ is given by \eqref{def:Udx_0}.  
	\end{theo}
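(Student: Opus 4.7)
The plan is to carry out a classical Lyapunov--Schmidt reduction adapted to the Hardy--Sobolev setting. Writing any candidate $u=\ud+\phi$ with $\phi\in\kdo^\perp$, expanding the nonlinearity around $\ud$, and using the definition \eqref{defreste} of $\rd$, the critical point equation $J_h'(u)=0$ decomposes into the $\kdo^\perp$-projected equation
\begin{equation*}
\ldo(\phi)=\px\Big[-(\dg+h)^{-1}\rd+(\dg+h)^{-1}\n(\phi)\Big],
\end{equation*}
together with a one-dimensional Lagrange-multiplier equation on $\kdo$, where
\begin{equation*}
\ldo(\phi):=\phi-\px\Big[(\dg+h)^{-1}\Big((\crit-1)\frac{\ud^{\crit-2}}{d_g(x_0,\cdot)^s}\phi\Big)\Big]
\end{equation*}
and $\n(\phi):=\frac{(\ud+\phi)_+^{\crit-1}-\ud^{\crit-1}-(\crit-1)\ud^{\crit-2}\phi}{d_g(x_0,\cdot)^s}$ collects the superlinear remainder.

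The main analytic step is the uniform invertibility of $\ldo:\kdo^\perp\to\kdo^\perp$. I would argue by contradiction: assume there exist sequences $\da\to 0$, $h_\alpha\to h_0$ in $L^\infty(M)$, and $\phi_\alpha\in K_{\da}^\perp$ with $\vv\phi_\alpha\vv_{h_\alpha}=1$ but $L_{\da,h_\alpha}(\phi_\alpha)\to 0$ in $\hsm$. Rescaling $\tilde\phi_\alpha(X):=\da^{(n-2)/2}\phi_\alpha(\eo(\da X))$, extended by zero outside $B_{r_0/\da}(0)$, yields a bounded sequence in $\dundeux$ which, by Jaber's weighted Hardy--Sobolev embedding and standard elliptic regularity, converges weakly to some $\tilde\phi_\infty$ solving
\begin{equation*}
\Delta_\xi\tilde\phi_\infty=(\crit-1)\frac{U_1^{\crit-2}}{|X|^s}\tilde\phi_\infty\hbox{ in }\dundeux.
\end{equation*}
Passing the orthogonality $\l\phi_\alpha,Z_{\da}\r_{h_\alpha}=0$ to the limit (the mass term $h_\alpha\phi_\alpha Z_{\da}$ being of lower order under rescaling) gives $\tilde\phi_\infty\in K_0^\perp$, whence $\tilde\phi_\infty=0$ by \eqref{spanZ_0}. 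Testing the near-vanishing equation against $\phi_\alpha$ itself and exploiting the fact that $\tilde\phi_\alpha^2 U_1^{\crit-2}/|X|^s$ concentrates only at the origin in the limit then upgrades weak to strong convergence $\phi_\alpha\to 0$ in $\hsm$, contradicting $\vv\phi_\alpha\vv_{h_\alpha}=1$.

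Once $\vv\ldo^{-1}\vv\leq C$ is established uniformly in $\delta$ and $h$, the weighted H\"older--Sobolev inequality gives $\vv\n(\phi)\vv_{\frac{2n}{n+2}}\leq C\vv\phi\vv_{\hsm}^{\min(2,\crit-1)}$, and a standard contraction mapping applied to
\begin{equation*}
T_\delta(\phi):=\ldo^{-1}\px\Big[-(\dg+h)^{-1}\rd+(\dg+h)^{-1}\n(\phi)\Big]
\end{equation*}
on the ball $\{\phi\in\kdo^\perp:\vv\phi\vv_{\hsm}\leq 2C\vv\rd\vv_{\frac{2n}{n+2}}\}$ produces a unique fixed point $\p\in\kdo^\perp$ satisfying $\vv\p\vv_{\hsm}\leq C\vv\rd\vv_{\frac{2n}{n+2}}$. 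The $C^1$ dependence on $\delta$ follows from the implicit function theorem applied to the $C^1$ map $(\delta,\phi)\mapsto\phi-T_\delta(\phi)$, the smoothness being guaranteed by the smooth $\delta$-dependence of $\zd$ and hence of $\px$.

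For the variational reduction, the construction automatically gives $J_h'(\ud+\p)=\la_\delta\,\zd/\vv\zd\vv_h^2$ for some Lagrange multiplier $\la_\delta\in\rr$. Differentiating the reduced map $\delta\mapsto J_h(\ud+\p)$ and using the exact identity $\pr_\delta\ud=\delta^{-1}\zd$ (note that the cutoff $\chi(d_g(x_0,\cdot))$ is $\delta$-independent) together with the orthogonality $\l\p,\zd\r_h=0$ to handle the $\pr_\delta\p$ contribution, one computes $\frac{d}{d\delta}J_h(\ud+\p)=\delta^{-1}\la_\delta(1+o(1))$ as $\delta\to 0$. For $\eps$ small enough, the coefficient of $\la_\delta$ is nonzero, so $\delta$ is a critical point of the reduced functional if and only if $\la_\delta=0$, i.e.\ if and only if $\ud+\p$ is a genuine critical point of $J_h$. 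The main obstacle will be the uniform linear invertibility: unlike the case $s=0$ treated in Robert--V\'etois, the Hardy--Sobolev weight $d_g(x_0,\cdot)^{-s}$ breaks conformal invariance on $M$, so one must carefully control the weighted nonlocal term in the blow-up limit and verify that the $\l\cdot,\zd\r_h$-orthogonality is transported through the rescaling to yield $\tilde\phi_\infty\perp Z_0$ in $\dundeux$.
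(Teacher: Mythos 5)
Your proposal is correct and follows essentially the same route as the paper: uniform invertibility of $\ldo$ on $\kdo^{\perp}$ by a contradiction/rescaling argument using the nondegeneracy \eqref{spanZ_0}, a Picard fixed point for $T$ yielding $\vv\p\vv_{\hs}\leq C\vv\rd\vv_{\frac{2n}{n+2}}$, and the Lagrange-multiplier reduction exploiting $\pr_\delta\ud=\delta^{-1}\zd$ and $\l\zd,\pr_\delta\p\r_h=-\l\pr_\delta\zd,\p\r_h$ to show $\la_\delta=0$ at critical points of the reduced functional. The only cosmetic differences are that you phrase the superlinear remainder estimate as $\vv\n(\phi)\vv_{\frac{2n}{n+2}}\lesssim\vv\phi\vv^{\crit-1}$ where the paper invokes $F\in C^{2,\theta}_{loc}$ with $\theta<\min\{1,\crit-2\}$.
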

The proof is classical, and we only stress on the necessary changes due to our context. We prove the Theorem \ref{maintheo1} by constructing  a solutions $u\in H_1^2(M)$ to equation \eqref{equa:HS1}, or equivalently to $u-(\Delta_g+h)^{-1}\left( F^{\,\prime}(u)\right)=0$. This is equivalent to find $u\in H_1^2(M)$ such that
 \begin{eqnarray}
 \Pi_{\kdo} \left(u-(\Delta_g+h)^{-1}\left( F^{\,\prime}(u)\right)\right)=0, \label{system1}\non\\
 \Pi_{\kdo^{\perp}}\left(	u-(\Delta_g+h)^{-1}\left( F^{\,\prime}(u)\right)\right)=0.\label{system2}
 \end{eqnarray}
 We first prove that for $0<\dl<<1$,  small enough, there exists a unique  $\p\in \kdo^{\perp}\cap H_{1}^2(M)$ small enough such that $u_{\delta}=\ud+\phi_{\delta}$ solves  \eqref{system2}. For any $\delta>0$, we introduce the map $\ldo:  \kdo^{\perp} \to \kdo^{\perp} $ defined by 
\begin{equation}\label{ldlxo}
	\ldo(\phi):=\Pi_{K_{\dl}^{\perp}}\left( \phi-(\Delta_g+h)^{-1}(\fp(\ud)\phi)\right) \bb{ for all } \phi \in H_1^2(M).
\end{equation}
We will begin this section by proving the invertibility of $\ldo$.
\begin{prop}\label{propestlphi}
	We fix $h_0\in L^{\infty}(M)$ such that $\dg+h_0$ is coercive. Then, there exists $\ep >0$ such that for any $h\in L^{\infty}(M)$ satisfying $\vv h-h_0\vv_{\infty}< \ep$,
there exists a constant $C>0$ such that for any $x_0\in M$ and any $\dl \in(0,\ep)$, we have that 
	\begin{equation}\label{conditionLphi}
		\vv \ldo(\phi) \vv_{\hs} \geq C \, \vv \phi \vv_{\hs} \bb{ for all }\phi \in H_1^2(M).
	\end{equation}
	In particular, $\ldo$ is a bi-continuous isomorphism.
\end{prop}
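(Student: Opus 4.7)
The plan is a contradiction-and-rescaling argument classical for finite-dimensional reductions, adapted to the Hardy--Sobolev weight $d_g(x_0,\cdot)^{-s}$. Suppose \eqref{conditionLphi} fails: there exist sequences $\da\to 0$, $h_\al\to h_0$ in $L^\infty(M)$, and $\pa\in\ka$ with $\vv\pa\vv_{h_\al}=1$ such that $\psi_\al:=L_{\da,h_\al}(\pa)$ satisfies $\vv\psi_\al\vv_{h_\al}\to 0$. From \eqref{ldlxo}, since $\pa,\psi_\al\in\ka$, there is $\la_\al\in\rr$ with
\begin{equation*}
\pa-(\dg+h_\al)^{-1}\!\left((\crit-1)\frac{U_{\da}^{\crit-2}}{d_g(x_0,\cdot)^s}\pa\right)=\psi_\al+\la_\al Z_{\da}.
\end{equation*}
Pairing this identity with $Z_{\da}$ in $\l\cdot,\cdot\r_{h_\al}$, using $\pa,\psi_\al\in\ka$, Hölder, and the uniform lower bound on $\vv Z_{\da}\vv_{h_\al}$, shows $|\la_\al|\leq C$; thus, along a subsequence, $\la_\al\to\la_\infty$. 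The goal is a contradiction with $\vv\pa\vv_{h_\al}=1$.

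\medskip\noindent\emph{Rescaling and vanishing of the weak limit.} Set $\pta(X):=\da^{(n-2)/2}\pa(\eo(\da X))$ on $|X|<r_0/\da$, extended by $0$ to $\rn$. In normal coordinates at $x_0$, the scaling of the Dirichlet energy yields $\vv\nabla\pta\vv_{L^2(\rn)}\leq(1+o(1))\vv\nabla\pa\vv_{L^2(M)}$, so $\pta$ is bounded in $\dundeux$ and, up to a subsequence, $\pta\wk\widetilde{\phi}_\infty$. Testing the displayed identity against $x\mapsto\da^{-(n-2)/2}\varphi(\da^{-1}\eo^{-1}(x))\chi(d_g(x_0,x))$ for $\varphi\in C_c^\infty(\rn)$: the $h_\al$-contribution scales like $\da^2$, the $\psi_\al$ term is $o(1)$, and passing to the limit,
\begin{equation*}
\Delta_\xi\widetilde{\phi}_\infty=(\crit-1)\frac{U_1^{\crit-2}}{|X|^s}\widetilde{\phi}_\infty+\la_\infty\Delta_\xi Z_0\hbox{ in }\rn.
\end{equation*}
Pairing this limit equation with $Z_0$ and using \eqref{eq:Z0} forces $\la_\infty(\crit-1)\int_{\rn}|X|^{-s}U_1^{\crit-2}Z_0^2\, dX=0$, hence $\la_\infty=0$. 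The orthogonality $\l\pa,Z_{\da}\r_{h_\al}=0$ rescales (the zero-order contribution being $O(\da^2)$) to $\int_{\rn}(\nabla\widetilde{\phi}_\infty,\nabla Z_0)\, dX=0$, so \eqref{spanZ_0} yields $\widetilde{\phi}_\infty\equiv 0$.

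\medskip\noindent\emph{Closing the contradiction and bijectivity.} Pair the first displayed identity with $\pa$ in $\l\cdot,\cdot\r_{h_\al}$; using that $\l(\dg+h_\al)^{-1}w,v\r_{h_\al}=\int_M wv\, dv_g$ and $\l\pa,Z_{\da}\r_{h_\al}=0$,
\begin{equation*}
1=(\crit-1)\int_M\frac{U_{\da}^{\crit-2}\pa^2}{d_g(x_0,x)^s}\, dv_g+\l\psi_\al,\pa\r_{h_\al}.
\end{equation*}
The last term is $o(1)$ by Cauchy--Schwarz. The change of variables $x=\eo(\da X)$ turns the remaining integral, up to a factor $1+o(1)$, into $\int_{\rn}|X|^{-s}U_1^{\crit-2}\pta^2\, dX$. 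Splitting this over $B_R\setminus B_{1/R}$ and its complement, the local piece vanishes by Rellich compactness $\pta\to 0$ in $L^2_{\mathrm{loc}}(\rn)$, while Hölder bounds the tails by $\vv\pta\vv_{L^{\crito}(\rn)}^2\vv|X|^{-s}U_1^{\crit-2}\vv_{L^{n/2}(\{|X|<1/R\}\cup\{|X|>R\})}$, the second factor going to $0$ as $R\to\infty$ (direct computation from $U_1(X)\sim|X|^{-(n-2)}$ at infinity and $s<2$). Letting $\al\to\infty$ then $R\to\infty$ gives $1=o(1)$, the desired contradiction. Injectivity of $\ldo$ follows; bijectivity is then a consequence of the Fredholm alternative since $\ldo$ differs from the identity by a compact operator (via a splitting of $d_g(x_0,\cdot)^{-s}U_\delta^{\crit-2}$ combined with compact Sobolev embeddings $\hsm\hookrightarrow L^p(M)$ for $p<\crito$). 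The genuinely delicate step is the limit in the weighted integral: unlike the case $s=0$, Rellich alone is insufficient near the origin, and one must exploit the sharp integrability $|X|^{-s}U_1^{\crit-2}\in L^{n/2}(\rn)$.
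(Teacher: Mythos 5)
Your proposal is correct and follows essentially the same route as the paper: a contradiction argument with blow-up rescaling $\pta(X)=\da^{(n-2)/2}\pa(\eo(\da X))$, identification of the weak limit as an element of $K_0$ killed by the orthogonality to $Z_0$, and the final contradiction from the vanishing of the weighted quadratic form $(\crit-1)\int_M d_g(x_0,\cdot)^{-s}U_{\da}^{\crit-2}\pa^2\,dv_g$. The only differences are organizational (you bound $\la_\al$ first and show $\la_\infty=0$ via the limit equation, whereas the paper proves $\la_\al\to 0$ directly), and you spell out the annulus splitting and the $L^{n/2}$ integrability of $|X|^{-s}U_1^{\crit-2}$ that the paper leaves to ``integration theory''.
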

\begin{proof} We argue by contradiction. We assume that there exist a sequence $(\da)_\alpha>0$ and a sequence $(h_\alpha)_{\alpha}\in L^\infty(M)$ such that $\da\to 0$, $\Vert h_\alpha-h_0\Vert_\infty\to 0$ as $\alpha \to +\infty$ and a sequence of functions $(\pa)_{\alpha\in \nn}\in \h$ satisfying
\begin{equation}\label{condpa}
 \vv \pa \vv_{H_1^2}=1, \quad	 \pa \in\ka\hbox{ and }\vv L_{\alpha}(\pa)\vv_{H_1^2}\to 0 \bb{ as } \alpha\to +\infty,
\end{equation}
where $L_{\alpha}:=L_{\delta_{\alpha}, h_\alpha}$. For any $\al$, we set $\bar{\chi}_\alpha(X):=\bar{\chi}\left( \dl_{\al} |X|\right)$ where $\bar{\chi}\in C^\infty_c(\rr)$ is such that  $\bar{\chi}_{|(-\infty,2r_0]}\equiv 1$ and
\begin{equation*}
	\pta(X):= \bar{\chi}_\alpha(X) \da^{\frac{n-2}{2}} \pa\circ \eo(\da X) \bb{ for all } X\in  \rr^n.
\end{equation*}
By \eqref{condpa} and with change of variable, we get that the sequence $(\pta)_{\al\in\nn}$ is bounded in $\dundeux$. Passing if necessary to a subsequence, we may assume  that $\pta \wk \widetilde{\phi}$ weakly in $\dundeux$. Set $Z_{\al}:=Z_{\da }$ when $Z_{\da}$ is defined in \eqref{def:Zdlx_0}. Since for any $\al$, the function $\pa$ belongs to $\ka$, and  using a change of variables, we obtain
\begin{eqnarray}\label{prozapa}
0=	\l Z_{\al}, \pa\r_{h}&=&\int_{M} ( \nb Z_{\al}, \nb \pa )_{g}\, dv_g+\int_{M} h_\alpha Z_{\al}\pa\, dv_g\non\\
&=&\int_{\rr^n} (\nb(\cda Z_{0}), \nb \pta )_{g_{\al}}\, dv_{g_{\al}}+\da^2\int_{\rr^n} \tilde{h}_{\al}\cda Z_0\pta \, dv_{g_{\al}},
\end{eqnarray}
where $g_{\al}:=\eo^{\star}g(\da X)$, $\tilde{h}_{\al}(X):=h_\alpha(\eo(\da X))$ for all $X\in \rr^n$,  $Z_0$ as in \eqref{eq:Z0}  and $\cda(X):=\chi(\delta_\alpha |X|)$ for all $X\in\rn$.  For $r_0>0$, by H\"older's inequality and since $\Vert h_\alpha-h_0\Vert_\infty\to 0$, we have that 
\begin{eqnarray*} 
\int_{\rr^n} \tilde{h}_{\al}\cda Z_0\pta \, dv_{g_{\al}}&=&O\left( \int_{B\left( 0,3r_0\delta_{\alpha}^{-1}\right) }|Z_0\pta| \, dv_{g_{\al}}\right)\non\\
 &=&O\left(\left( \int_{B\left( 0,3r_0\delta_{\alpha}^{-1}\right) } |Z_0|^{\frac{2n}{n+2}}\, dX\right)^{\frac{n+2}{n}}\left( \int_{\rr^n } |\widetilde{\phi}_{\alpha}|^{\crito}\, dX\right)^{\frac{2}{\crito}} \right)\non\\ 
 &=&O\left(\vv \pta \vv_{\dundeux}\left( \int_{B\left( 0,3r_0\delta_{\alpha}^{-1}\right) } |Z_0|^{\frac{2n}{n+2}}\, dX\right)^{\frac{n+2}{n}}\right),
\end{eqnarray*}
where the last assertion holds by the continuity of the embedding of $\dundeux$ into $L^{\crito}(\rr^n)$.  Since $\pta$ is bounded in $\dundeux$, it follows from the explicit expression of $Z_0$   that 
\begin{equation*}
	\lim_{\alpha\to +\infty} \delta_{\alpha}^2\int_{\rr^n} \tilde{h}_{\al}\cda Z_0\pta \, dv_{g_{\al}}=0.
\end{equation*} 
This last fact, $\pta \wk \widetilde{\phi}$ weakly in $\dundeux$ and  passing the limit in \eqref{prozapa} yield
\begin{eqnarray}\label{eq:0}
	0=\int_{\rr^n} ( \nb Z_0, \nb \widetilde{\phi})_{\xi} \, dX= (\crit-1)\int_{\rr^n}\frac{U_1^{\crit-2}}{|X|^s}Z_0\widetilde{\phi}\, dX,
\end{eqnarray}
where the last equality holds by \eqref{spanZ_0}. Since $\pa-(\Delta_g+h_\alpha)^{-1}(\fp(U_{\delta_\alpha}))-L_{\al}(\pa)$ belongs to $K_{\delta_{\alpha}}$ that there exists $\la_{\al}\in \rr$ such that 
\begin{equation}\label{initialequation}
	\pa-(\Delta_g+h_\alpha)^{-1}(\fp(U_{\delta_\alpha,x_0})\pa)-L_{\al}(\pa)=\la_{\al} Z_{\al}.
		\end{equation}
	\begin{step}\label{step1}
		We claim that 
		\begin{equation}\label{eq:claim1}
			\vv \pa -(\Delta_g+h_\alpha)^{-1}(\fp(U_{\delta_\alpha, x_0})\pa)\vv_h\to 0 \bb{ as } \al \to +\infty.
		\end{equation}
	\end{step}
	\medskip\noindent{\it Proof of Step \ref{step1}}:
	Indeed, it follows from \eqref{initialequation} that,
	\begin{eqnarray*}
		\vv\pa-(\Delta_g+h_\alpha)^{-1}(\fp(U_{\delta_\alpha, x_0})\pa)\vv_h\leq \vv L_{\al}(\pa)\vv_h+|\la_{\al}|\cdot \vv Z_{\al}\vv_h.
	\end{eqnarray*}
Therefore, in order to establish \eqref{eq:claim1}, it is enough to show, using \eqref{condpa}, that \( \lambda_{\alpha} \to 0 \) as \( \alpha \to +\infty \). For any $\al$, since $\pa, L_{\al}(\pa)\in \ka$ and multiplying \eqref{initialequation} by $Z_{\al}$ we get that
	\begin{eqnarray}\label{eq:01}
		\la_{\al} \vv Z_{\al} \vv_{h}^2&=&-\l (\Delta_g+h_\alpha)^{-1}(\fp(U_{\delta_\alpha, x_0})\pa),Z_{\al}\r_h\non\\
		&=&-(\crit-1)\int_{M}\frac{ (U_{\delta_\alpha, x_0})_+^{\crit-2}}{d_g(x_0,x)^s}Z_{\al}\pa\, dv_g.
	\end{eqnarray} 
By changing the variable, we obtain that
	\begin{eqnarray*}
	 \vv Z_{\al} \vv_{h}^2=\int_{\rr^n} ( \nb(\cda Z_0), \nb(\cda Z_0))_{\xi} \, dv_{g_{\al}}+\da^2\int_{\rr^n} h_{\al}\cda^2 Z_0^2 \, dv_{g_{\al}}.
	\end{eqnarray*} 
	Hence, passing the limit $\al\to +\infty$, and we get that 
	\begin{equation}\label{conzal}
		\vv Z_{\al} \vv_{h}^2\to \vv Z_{0} \vv_{\dundeux}^2>0.
	\end{equation}
Via integration theory, we get that
\begin{eqnarray}\label{restU}
&&\da^{-\frac{n-2}{2}}	\int_{M}\frac{\left( U_{\delta_\alpha}\right)_+^{\crit-2}}{d_g(x_0,x)^s}Z_{\al}\pa\, dv_g=\int_{\rr^n} \frac{\cda U_1^{\crit-2}}{|X|^s}Z_0 \pta\,  dv_{g_{\al}}\non\\
&&\to\int_{\rr^n} \frac{ U_1^{\crit-2}}{|X|^s}Z_0 \tilde{\phi} \,dX=0\hbox{ as }\alpha\to +\infty.
\end{eqnarray}
Then, it remains to inject \eqref{conzal} and \eqref{restU} into \eqref{eq:01}, which gives $\la_{\al}\to 0$ as $\al\to +\infty$. This ends the proof of Step \ref{step1}. \qed \par
\begin{step}\label{step2}
	 We claim that $\lim_{\al\to +\infty} \vv\pa\vv_h=0$.\end{step}
\noindent{\it Proof of Step \ref{step2}:} For any bounded sequence $(\vra)_{\al\in\nn}$ in $\hsm$, we have that 
	\begin{eqnarray}\label{eq:02}
		\l \pa,\vra\r_h&=&\left( \crit-1\right) \int_{M}\frac{\left( U_{\delta_\alpha, x_0}\right)_+^{\crit-2}}{d_g(x_0,x)^s}\pa\vra\, dv_g
	\\
	&&+\l \pa -(\Delta_g+h_\alpha)^{-1}(\fp(U_{\delta_\alpha})\pa),\vra\r_{h_\alpha}.\non
	\end{eqnarray}
By Cauchy-Schwarz inequality and \eqref{eq:claim1}, we get that 
\begin{eqnarray*}
	&&\left|\l \pa -(\Delta_g+h_\alpha)^{-1}(\fp(U_{\delta_\alpha, x_0})\pa),\vra\r\right|\\
	&&\leq \vv \pa -(\Delta_g+h_\alpha)^{-1}(\fp(U_{\delta_\alpha, x_0})\pa)\vv_{\hs} \vv \vra\vv_{\hs}\to 0
\end{eqnarray*}
when $\al\to +\infty$. Thus, returning to \eqref{eq:02}, we obtain that
\begin{equation}\label{scalphivarphi}
	\l \pa,\vra\r_h=\left( \crit-1\right) \int_{M}\frac{\left(  U_{\da,x_0}\right)_+ ^{\crit-2}}{d_g(x_0,x)^s}\pa\vra\, dv_g+ o(1),
\end{equation}
where $o(1)\to 0$ as $\al\to +\infty$. For any smooth function $\varphi$ with compact support in $\rr^n$ and for any $\alpha$, we take 
\begin{equation*}
	\varphi_{\alpha}(x)=\delta_{\alpha}^{\frac{2-n}{2}}\bar{\chi} (x)\varphi\left( \delta_{\al}^{-1}\exp_{x_0}^{-1}(x)\right),
\end{equation*}
where $\bar{\chi}_{|B(x_0,2r_0)}\equiv 1$. Therefore, we get  by a change of variables   that   
\begin{equation}\label{eq:15}
\int_{\rr^n} (\nabla\pta,\nabla\varphi )_{g_\alpha}+\delta_{\al}^2\int_{\rr^n} h_{\alpha} \pta \varphi \, dv_{\ga}=\left( \crit-1\right) \int_{\rr^n}\frac{\left( \cda U_1\right)_+^{\crit-2}}{|X|^s}\pta\varphi\, dv_{g_{\alpha}}+o(1)
\end{equation}
as $\alpha\to +\infty$. By H\"older and Sobolev inequalities, we get that 
\begin{eqnarray*}
	&&\int_{\rr^n \backslash B\left( 0,R\right) }\frac{\left( \cda U_1\right)_+^{\crit-2}}{|X|^s}\pta\varphi dv_{g_{\alpha}}\\
	&=&O\left(\vv \pta\vv_{\dundeux} \vv \varphi\vv_{\dundeux}\left(\int_{\rr^n\backslash B\left( 0,R\right)}\frac{U_1^{\crit}}{|X|^s}\,  dX \right)^{\frac{\crit-2}{\crit}}  \right)\\
	&=& O\left(\eps(R)\vv \pta\vv_{\dundeux} \vv \varphi\vv_{\dundeux}  \right),
\end{eqnarray*}
where $\lim_{R\to \infty}\eps(R)=0$. Letting $\alpha\to +\infty$, and using that $\pta$ is bounded in $\dundeux$, we obtain that 
 \begin{equation}\label{eq:17}
 	\int_{\rr^n}\frac{\left( \cda U_1\right)_+^{\crit-2}}{|X|^s}\pta\varphi\, dv_{g_{\alpha}}=\int_{\rr^n} \frac{  U_1^{\crit-2}}{|X|^s}\pta\varphi\, dv_{g_{\alpha}}+o(1) \bb{ as } \alpha\to +\infty.
 \end{equation}
Plugging this last equation in \eqref{eq:15} yields
\begin{equation*}
	\int_{\rr^n} (\nabla \pta,\nabla\varphi )_{g_\alpha}+\delta_{\al}^2\int_{\rr^n} h_{\alpha} \pta \varphi \, dv_{\ga}=\left( \crit-1\right) \int_{B(0,R)} \frac{   U_1^{\crit-2}}{|X|^s}\pta\varphi\, dv_{g_{\alpha}}+\eps(R)+o(1).
\end{equation*}
Letting $\alpha\to +\infty$, and using and $\pta \wk \widetilde{\phi}$ weakly in $\dundeux$ yields $\widetilde{\phi}\in K_0$ defined in \eqref{eq:Z0}. Hence, using \eqref{eq:0} we get $\widetilde{\phi}\equiv0$. Thus, we have that $\pta \wk 0$ weakly in $\dundeux$. Taking $\vra=\pa$ in \eqref{scalphivarphi}, we get that 
\begin{equation}\label{normphil2}
	\vv \pa\vv_h^2=\left( \crit-1\right) \int_{M}\frac{ U_{\delta_\alpha,x_0}^{\crit-2}}{d_g(x_0,x)^s}\pa^2\, dv_g+o(1) \bb{ as } \al\to +\infty.
\end{equation}
From \eqref{eq:17} and \eqref{condpa}, we obtain that 
\begin{equation*}
	\int_{M}\frac{ U_{\da,x_0}^{\crit-2}}{d_g(x_0,x)^s}\pa^2\, dv_g=\int_{\rr^n} \frac{  U_1^{\crit-2}}{|X|^s}\pta^2\, dv_{g_{\alpha}} +o(1) \bb{ as } \al \to +\infty.
\end{equation*}
Since $U_1\in L^{\crit}(\rr^n,|X|^{-s})$ and $\pta \wk 0$, integration theory yields
\begin{equation*}
	\int_{M}\frac{U_{\da,x_0}^{\crit-2}}{d_g(x_0,x)^s}\pa^2\, dv_g\to 0 \bb{ as } \alpha\to +\infty.
\end{equation*}
This implies with \eqref{normphil2} that $\vv \pa \vv_{h}\to 0$ as $\al\to +\infty$. This ends the proof of Step \ref{step2}. \qed\par
\noindent Finally, we get a contradiction with \eqref{condpa}. This prove 
\eqref{conditionLphi}. It follows from abstract functional analysis that $L_{\delta,h }$  is a bi-continuous isomorphism. This concludes the proof of Proposition \ref{propestlphi}.\end{proof}

\begin{prop}\label{mainprop1}
		We fix $h_0\in L^{\infty}(M)$ such that $\dg+h_0$ is coercive. Then, there exists $\ep >0$ such that for any $h\in L^{\infty}(M)$ satisfying $\vv h-h_0\vv_{\infty}<\ep$ and there exists a constant $C>0$ such that for any $x_0\in M$ and any $\dl \in(0,\ep)$, there exists a unique  $\phi_{\dl} \in C^1((0,\ep),H_1^2(M))$ that solves \eqref{system2}. In addition, we have $\phi_{\dl}\in K_{\dl}^{\perp} $ and 
	\begin{eqnarray*}
\vv \phi_{\dl} \vv_{H_1^2}\leq C\,\left\|  \ud-(\Delta_g+h)^{-1}\left( F^{\,\prime}(\ud)\right)\right\|_{H_1^2}\leq C\vv \rd\vv_{\frac{2n}{n+2}},
	\end{eqnarray*}
	where $C$ is a constant depending on $(M,g)$, $h_0$ and $\ep$. The remainder $R_{\dl}$ is defined in \eqref{defreste}. Moreover, we have that $\vv R_{\dl}\vv_{\frac{2n}{n+2}}\leq c(\dl)$ for all $\dl\in(0,\ep)$, where $\lim_{\dl\to 0} c(\dl)=0$.
\end{prop}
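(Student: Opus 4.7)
The plan is to reformulate \eqref{system2} as a contractive fixed-point problem for $\phi \in \kdo^\perp$. Writing $u = \ud + \phi$, we split
$$F^{\,\prime}(\ud+\phi) = F^{\,\prime}(\ud) + \fp(\ud)\phi + N_{\dl}(\phi),\qquad N_{\dl}(\phi):=F^{\,\prime}(\ud+\phi)-F^{\,\prime}(\ud)-\fp(\ud)\phi,$$
and observe from \eqref{defreste} that $\ud-(\dg+h)^{-1}F^{\,\prime}(\ud)=(\dg+h)^{-1}\rd$. After projecting onto $\kdo^\perp$ and using $\phi\in\kdo^\perp$, equation \eqref{system2} becomes $\ldo(\phi)=\px[(\dg+h)^{-1}(N_{\dl}(\phi)-\rd)]$. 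Proposition \ref{propestlphi} makes $\ldo$ invertible with norm bounded uniformly in $\dl$ and $x_0$, so the problem is equivalent to the fixed-point equation $\phi=T_{\dl}(\phi)$, where
$$T_{\dl}(\phi):=\ldo^{-1}\,\px\!\left[(\dg+h)^{-1}(N_{\dl}(\phi)-\rd)\right].$$

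The heart of the argument is a nonlinear estimate on $N_{\dl}$ in $L^{\frac{2n}{n+2}}$. Since $n\geq 7$ forces $\crit-1=\tfrac{n+2-2s}{n-2}\in(1,2)$, Taylor's formula applied to $t\mapsto t_+^{\crit-1}$ yields the pointwise bounds
$$|N_{\dl}(\phi)|\leq C\,d_g(x_0,x)^{-s}|\phi|^{\crit-1},\qquad |N_{\dl}(\phi_1)-N_{\dl}(\phi_2)|\leq C\,d_g(x_0,x)^{-s}(|\phi_1|+|\phi_2|)^{\crit-2}|\phi_1-\phi_2|.$$
Combining H\"older's inequality with the Hardy-Sobolev embedding $\hsm\hookrightarrow L^{\crito}(M,d_g(x_0,\cdot)^{-s})$ upgrades these to
$$\|N_{\dl}(\phi)\|_{\frac{2n}{n+2}}\leq C\|\phi\|_{\hs}^{\crit-1},\qquad \|N_{\dl}(\phi_1)-N_{\dl}(\phi_2)\|_{\frac{2n}{n+2}}\leq C(\|\phi_1\|_{\hs}+\|\phi_2\|_{\hs})^{\crit-2}\|\phi_1-\phi_2\|_{\hs}.$$
Using the continuity of $(\dg+h)^{-1}\colon L^{\frac{2n}{n+2}}\to\hsm$ together with Proposition \ref{propestlphi}, one obtains a constant $C_1=C_1(M,g,h_0,\ep)$ with $\|T_{\dl}(\phi)\|_{\hs}\leq C_1\bigl(\|\phi\|_{\hs}^{\crit-1}+\|\rd\|_{\frac{2n}{n+2}}\bigr)$ and an analogous Lipschitz bound. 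Setting $\rho_{\dl}:=2C_1\|\rd\|_{\frac{2n}{n+2}}$ and taking $\dl$ small enough so that $C_1\rho_{\dl}^{\crit-2}\leq \tfrac14$, the map $T_{\dl}$ is a $\tfrac12$-contraction on $\overline{B(0,\rho_{\dl})}\cap\kdo^\perp$, and Banach's theorem produces the unique fixed point $\p$ with $\|\p\|_{\hs}\leq\rho_{\dl}$, which is the stated estimate.

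The $C^1$ dependence $\dl\mapsto\p$ then follows from the implicit function theorem applied to the $C^1$ map $(\dl,\phi)\mapsto \phi-T_{\dl}(\phi)$, whose partial $\phi$-differential at $\p$ is $I-DT_{\dl}(\p)$ and is invertible since its operator norm is at most $1/2$. To prove $\|\rd\|_{\frac{2n}{n+2}}\to 0$, one splits $M$ into $B(x_0,\sqrt{\dl})$ and its complement: on the ball one rescales by $X=\dl^{-1}\eo^{-1}(x)$ and uses \eqref{eq:1} to cancel the Euclidean principal part, leaving only curvature corrections and a cutoff-$\chi$ error; outside the ball, $\ud$ and its derivatives decay like $\dl^{\frac{n-2}{2}}$. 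Both contributions are $o(1)$ in $L^{\frac{2n}{n+2}}$. The main obstacle in the scheme is the nonlinear estimate: the singular weight $d_g(x_0,\cdot)^{-s}$ forbids a direct application of standard Sobolev embeddings and forces the use of the Hardy-Sobolev inequality, while the hypothesis $n\geq 7$ is essential precisely because it guarantees $\crit-1<2$, making $N_{\dl}$ superlinear and $T_{\dl}$ contractive for small $\dl$.
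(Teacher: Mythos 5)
Your overall architecture is exactly the one the paper (tersely) invokes: rewrite \eqref{system2} as a fixed-point equation for $T_\delta$ on $\kdo^\perp$ using the invertibility of $\ldo$ from Proposition \ref{propestlphi}, run Banach's fixed point on a ball of radius comparable to $\Vert\rd\Vert$, and get the $C^1$ dependence on $\delta$ from the implicit function theorem. However, the step you yourself identify as ``the heart of the argument'' contains a genuine gap: the claimed bound $\Vert N_\delta(\phi)\Vert_{\frac{2n}{n+2}}\leq C\Vert\phi\Vert_{\hs}^{\crit-1}$ does not follow from H\"older plus the Hardy--Sobolev embedding, and is in fact false for part of the admissible range of $(n,s)$. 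If you try to estimate $\int_M\big(d_g(x_0,x)^{-s}|\phi|^{\crit-1}\big)^{\frac{2n}{n+2}}dv_g$ by H\"older against $\int_M d_g(x_0,x)^{-s}|\phi|^{\crit}dv_g$, the conjugate exponent is forced to be $p'=\frac{(n+2)\crit}{2s}$ and the leftover weight is exactly $d_g(x_0,x)^{-n}$, which is not integrable: the exponents land precisely on the borderline. This is not just a failure of the method: taking $\phi(x)=d_g(x_0,x)^{-\frac{n-2}{2}}|\log d_g(x_0,x)|^{-\beta}$ near $x_0$ (cut off away from $x_0$), one has $\phi\in\hsm$ for $\beta>\frac12$ while $d_g(x_0,\cdot)^{-s}|\phi|^{\crit-1}\in L^{\frac{2n}{n+2}}$ requires $\beta>\frac{n+2}{2n(\crit-1)}$, and $\frac{n+2}{2n(\crit-1)}>\frac12$ whenever $n(s-1)>2$ (e.g.\ $n=7$, $s>9/7$). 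So $N_\delta$ does not map $\hsm$ into $L^{\frac{2n}{n+2}}$ in general, and your contraction estimate for $T_\delta$ collapses at this point.

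The fix is standard and is what the paper's framework implicitly uses when it records that $F\in C^{2,\theta}_{loc}(\hsm)$: measure the nonlinear remainder in the dual norm $(\hsm)'$ rather than in $L^{\frac{2n}{n+2}}$. For any test function $\varphi$, H\"older in the weighted space gives
\begin{equation*}
\left|\l N_\delta(\phi),\varphi\r\right|\leq C\int_M\frac{|\phi|^{\crit-1}|\varphi|}{d_g(x_0,x)^{s}}\,dv_g\leq C\left(\int_M\frac{|\phi|^{\crit}}{d_g(x_0,x)^{s}}dv_g\right)^{\frac{\crit-1}{\crit}}\left(\int_M\frac{|\varphi|^{\crit}}{d_g(x_0,x)^{s}}dv_g\right)^{\frac{1}{\crit}}\leq C\Vert\phi\Vert_{\hs}^{\crit-1}\Vert\varphi\Vert_{\hs},
\end{equation*}
and similarly for the Lipschitz bound; since $(\Delta_g+h)^{-1}$ is bounded from $(\hsm)'$ to $\hsm$ uniformly for $\Vert h-h_0\Vert_\infty<\ep$, the contraction and the final estimate $\Vert\p\Vert_{\hs}\leq C\Vert\ud-(\Delta_g+h)^{-1}F'(\ud)\Vert_{\hs}\leq C\Vert\rd\Vert_{\frac{2n}{n+2}}$ go through unchanged (note that $\rd$ itself, unlike $N_\delta(\phi)$, genuinely lies in $L^{\frac{2n}{n+2}}$, so the right-hand side of the Proposition is unaffected). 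Two smaller remarks: the pointwise Taylor bounds and the exponent $\crit-1\in(1,2)$ are indeed what one needs, but $\crit-1<2$ already holds for all $n\geq 6$ and all $s\in(0,2)$, so attributing the hypothesis $n\geq 7$ to this is misleading --- the Proposition itself requires no dimension restriction, and $n\geq 7$ only enters later in the energy expansions; also, your uniqueness is only uniqueness of the fixed point in the small ball, which is all the statement requires but is worth saying explicitly.
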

\begin{proof}
 Indeed, for any $\phi\in \kdo^{\perp}\subset H_1^2(M)$, we have that 
\begin{equation}\label{px1}
\px\left( \ud+\phi -(\Delta_g+h)^{-1}\left(F^{\,\prime}\left( \ud+\phi\right)  \right) \right) =0	 
\end{equation}
if and only if $\phi=T(\phi)$, where $T: \kdo^{\perp}\mapsto \kdo^{\perp}$ is such that 
\begin{equation}\label{defTphi}
	T(\phi):=\ldo^{-1}\left(\n(\phi)+\Pi_{K_{\dl}^{\perp}}\left(  (\Delta_g+h)^{-1}\left(F^{\,\prime}\left( \ud\right)\right) -\ud\right)\right),
\end{equation}
where $\ldo$ as defined in \eqref{ldlxo} and 
\begin{eqnarray}
&&	\n(\phi):=\Pi_{K_{\dl}^{\perp}}\left((\Delta_g+h)^{-1}\left(  F^{\,\prime}\left(  \ud+\phi\right)  -F^{\,\prime}\left( \ud\right) -\fp(\ud)\phi\right) \right).\label{nphi}
\end{eqnarray} 
The existence and the $C^1-$regularity of a solution to \eqref{px1} is via Picard's fixed point theorem. This is standard and we refer to Robert-V\'etois \cite{RV:Bangalore} of Micheletti-Pistoia-V\'etois \cite{MPV} for instance. An essential point here is that for $0<\theta<\min\{1,\crit-2\}$, we have that $F\in C^{2,\theta}_{loc}(H_1^2(M))$ and for all $\rho>0$, we have that $\left\|  F\right\|_{C^{2,\theta}(B(0,\rho))}\leq C(\rho,\theta)$ where $C(\rho,\theta)$ is a  positive constant. The limit of $\Vert\rd\Vert_{2n/(n+2)}$ as $\delta\to 0$ is an easy computation. This ends Proposition \ref{mainprop1}.
\end{proof}\par 
\subsection{Proof of Theorem \ref{maintheo1}:} For $\ep>0$ satisfying the hypothesis of Proposition \ref{mainprop1},  there exists $\phi_{\dl}\in C^1\left(  (0,\ep),\h\right) $ such that 
\begin{equation*} 
	\Pi_{\kdo^{\perp}}\left(u_{\dl} -(\Delta_g+h)^{-1}\left(  F^{\,\prime}\left( u_{\dl}\right)\right)\right) =0 \bb{ where }	u_{\dl}:=\ud+\phi_{\dl},
\end{equation*}
and
\begin{eqnarray*} 
\phi_{\dl}\in \kdo^{\perp} \bb{ and }	\vv \phi_{\dl}\vv_{H_1^2}\leq C	\vv R_{\dl}\vv_{\frac{2n}{n+2}}
\end{eqnarray*}
for $\dl\in (0,\ep)$. It then from the definition of $\kl$ there exists $\ld\in\rr$ such that 
\begin{equation*}
\Pi_{\kdo}\left(u_{\dl} -(\Delta_g+h)^{-1}\left(  F^{\,\prime}\left( u_{\dl}\right)\right)\right) =u_{\dl} -(\Delta_g+h)^{-1}\left( F^{\, \prime}\left( u_{\dl}\right) \right) =\ld\zd.	\end{equation*}
This  yields 
\begin{eqnarray}\label{djhu}
	DJ_{h}(u_{\dl})(\ph)=(	u_{\dl} -(\Delta_g+h)^{-1}\left( F^{\, \prime}\left( u_{\delta}\right) \right),\ph)_h=\ld\l\zd,\ph\r_h 
\end{eqnarray}
  for any $\varphi\in \h$. If $u_{\delta}$ is a critical point of $J_{h}$, then $\delta \in B(0,\ep)$ is a critical point for $\delta \mapsto J_h(u_{\delta})$. Conversely, we assume now that $\dl\in (0,\ep)$ is a critical point for the map $\dl\mapsto J_h(u_{\dl})$. We then get that 
\begin{eqnarray}\label{djhu1}
	0=\frac{\partial}{\partial\delta}J_h(u_{\dl})=DJ_{h}(u_{\dl}).\left(\partial_{\dl}\ud+\partial_{\dl}\phi_{\dl} \right).
\end{eqnarray}
It follows from \eqref{djhu} and \eqref{djhu1} that 
\begin{eqnarray}\label{prodzw}
	\ld\left( \l\zd,\partial_{\dl} \ud\r_h+\l\zd,\partial_{\dl}\phi_{\dl}\r_h\right) =0
\end{eqnarray}
Since $\phi_{\dl}\in \kdo^{\perp}$, we have that $\l Z_{\dl},\phi_{\delta}\r_h=0$. Differentiating  with respect to $\dl$, we get that $\l \partial_{\dl}Z_{\dl},\phi_{\dl}\r_h+\l Z_{\dl},\partial_{\dl}\phi_{\dl}\r_h=0$, and therefore \eqref{prodzw} rewrites 
\begin{equation*} 
	\big| \ld\l\zd,\partial_{\dl} U_{\dl, x_0}\r_h \big|  \leq \left| \ld\right| \vv \partial_{\dl}Z_{\dl}\vv_{\hs}\vv \phi_{\dl}\vv_{\hs}.
\end{equation*}
It follows from \eqref{def:Udx_0}, \eqref{prdlud} and \eqref{def:Zdlx_0} that $\partial_{\dl} \ud=\delta^{-1}Z_{\dl}$. Rough computations yield $\delta\vv \partial_\delta Z_{\delta}\vv_{\h}\leq C$. We then get  $|\ld |\cdot \Vert Z_\delta\Vert_{h}^2 \leq |\ld|\cdot C \vv \phi_{\dl}\vv_{\hs}$. Since $\Vert Z_\delta\Vert_{h}^2$ is bounded from below and $\vv \phi_{\dl}\vv_{\hs}\to 0$ as $\delta\to 0$,   for  $0<\delta<<1$, we get that $\ld =0$. This yields $DJ_{h}(u_{\dl})(\ph)=0$ for all $\varphi\in \h$, and then $u_{\dl}$ is a critical point of $J_{h}$ for $\delta \in (0, \ep)$. This ends the proof of Theorem \ref{maintheo1}. \qed

\section{Estimates of the rest}
We fix $h_0\in C^2(M)$ such that $\Delta_g+h_0$ is coercive, that is the first eigenvalue $\lambda_1(\Delta_g+h_0)$ is positive. We fix $C_0>0$ and we consider $h\in C^2(M)$ such that
\begin{equation}\label{hyp:h}
\Vert h\Vert_{C^2}\leq C_0\hbox{ and }\lambda_1(\Delta_g+h)\geq C_0^{-1}.
\end{equation}
In all this section, the controls and the convergences will all be uniform with respect to $h\in C^2(M)$ such that \eqref{hyp:h} holds. Note that for $C_0>0$ large enough depending on $h_0$, there existe $\eps_0>0$ depending also on $h_0$ such that \eqref{hyp:h} is satisfied for all $h\in C^2(M)$ such that $\Vert h-h_0\Vert_{C^2}<\eps_0$. We let $\u$ and $\ud$ be as in \eqref{def:U0} and \eqref{def:Udx_0}. Recall that the remainder is
$$\rd:=(\Delta_g+h)\ud-\frac{\ud^{\crit-1} }{d_g(x,x_0)^s}.$$

\subsection{Approximation of $\rd$} The explicit expression \eqref{def:Udx_0} yields 
$$|\rd(x)|\leq C\delta^{\frac{n-2}{2}}\hbox{ for }x\in M,\, d_g(x,x_0)>i_g(M)/4.$$
Since $\u$ is radially symmetric, we have that
$$\Delta_g\ud=-\frac{\partial_r(r^{n-1}\sqrt{|g|}\partial_r\u)}{r^{n-1}\sqrt{|g|}}=\Delta_\xi \ud-\frac{\partial_r\sqrt{|g|}}{\sqrt{|g|}}\partial_r\ud.$$
Cartan's expansion of the metric yields
$$\frac{\partial_r\sqrt{|g|}}{\sqrt{|g|}}=-\frac{1}{3}R_{ij} r\sigma^i\sigma^j+O(r^2)$$
in Riemannian polar coordinates, where $R_{ij}=R_{ij}(x_0)$ are the coordinates of the Ricci tensor at $x_0$ in the exponential chart at $x_0$. Since \eqref{hyp:h} yields $|h(x)-h(x_0)|\le C_0d_g(x,x_0)$ for all $x\in M$, using that $\u$ is a solution to \eqref{eq:1}, for $X\in \rn$ such that $|X|<i_g(M)/4$, we get
\begin{equation*}
\rd(\hbox{exp}_{x_0}(X))= h(x_0)\u(X)+\frac{1}{3}R_{ij} \sigma^i\sigma^j\delta^{-\frac{n-2}{2}}(r\partial_r U_1)(\delta^{-1}X)+O\left( \frac{\delta^{\frac{n-2}{2}}}{(\delta+|X|)^{n-3}}\right).
\end{equation*}
Finally, we get that
\begin{equation}\label{exp:Rd}
\rd(x)= h(x_0)\ud+\frac{1}{3}R_{ij} \sigma^i\sigma^j\delta^{-\frac{n-2}{2}}(r\partial_r U_1)(\delta^{-1}X)+O\left( \frac{\delta^{\frac{n-2}{2}}}{(\delta+d_g(x,x_0))^{n-3}}\right)
\end{equation}
for all $x\in M$. With the explicit expression of $U_1$ and $\ud$, we then get that
\begin{equation}\label{eq:R:7}
\Vert \rd\Vert_{\frac{2n}{n+2}} =\delta^2 \left\Vert   h(x_0)U_1+\frac{1}{3}R_{ij} \sigma^i\sigma^j (r\partial_r U_1) \right\Vert_{L^{\frac{2n}{n+2}}(\rn)}+o(\delta^2)\hbox{ if }n\geq 7
\end{equation}
We now let $\phi_\delta\in K_\delta^\perp$ be given by Proposition \ref{mainprop1}. Since $\phi_\delta$ is a fixed point for $T$ given by \eqref{defTphi} and bounding roughly  $N_\delta$ defined in \eqref{nphi}, we get that
\begin{eqnarray*}
\phi_\delta&=&	T(\phi_\delta)=\ldo^{-1}\left(\n(\phi)+\Pi_{K_{\dl}^{\perp}}\left(  (\Delta_g+h)^{-1}(-R_\delta)\right)\right)\\
&=&-\ldo^{-1}\left(\Pi_{K_{\dl}^{\perp}}   (\Delta_g+h)^{-1}\left(R_\delta\right)\right)+O(\Vert \rd\Vert_{\frac{2n}{n+2}}^{1+\theta})\\
&=& B_\delta+O(\Vert \rd\Vert_{\frac{2n}{n+2}}^{1+\theta})
\end{eqnarray*}
where $0<\theta<\min\{1, \crit-2\}$ and
\begin{equation}\label{def:B}
B_\delta:=-\ldo^{-1}\left(\Pi_{K_{\dl}^{\perp}}   (\Delta_g+h)^{-1}\left(R_\delta\right)\right)\in K_\delta^\perp
\end{equation}
and $L_{\delta,h}$ is defined in \eqref{ldlxo}. In particular, 
\begin{equation}\label{bnd:B:R}
\Vert B_\delta\Vert_{H_1^2}=O\left(\Vert R_\delta\Vert_{\frac{2n}{n+2}}\right).
\end{equation}
Note that \eqref{px1} rewrites as the existence of $\lambda_\delta\in\rr$ such that $J_h'(\ud+\phi_\delta)=\lambda_\delta\langle Z_\delta,\cdot\rangle_h$. Therefore, a Taylor expansion yields
$$J_h'(\ud+\phi_\delta)=J_h'(\ud)+J_h''(\ud)(\phi_\delta)+O(\Vert\phi_\delta\Vert^{1+\theta}).$$
Since $\phi_\delta\perp Z_\delta$, we then get that
$$J_h''(\ud)(\phi_\delta,\phi_\delta)=-J_h'(\ud)(\phi_\delta)+O(\Vert\phi_\delta\Vert^{2+\theta})$$
Therefore, the Taylor expansion of $J_h$ yields
\begin{eqnarray}
J_h(\ud+\phi_\delta)&=& J_h(\ud)+J_h'(\ud)(\phi_\delta)+\frac{1}{2}J_h''(\ud)(\phi_\delta,\phi_\delta)+ O(\Vert\phi_\delta\Vert^{2+\theta})\non\\
&=& J_h(\ud)-\frac{1}{2}J_h'(\ud)(\phi_\delta)+ O(\Vert\phi_\delta\Vert^{2+\theta})\non\\
&=& J_h(\ud)-\frac{1}{2}\int_M R_\delta\phi_\delta\, dv_g +O(\Vert\phi_\delta\Vert^{2+\theta})\non\\
&=& J_h(\ud)-\frac{1}{2}\int_M R_\delta B_\delta\, dv_g +O(\Vert R_\delta\Vert^{2+\theta})_{\frac{2n}{n+2}}\label{exp:energy}
\end{eqnarray}
Let us recall that $n\geq 7$. Let us assume that  $h(x_0)\neq 0\hbox{ or }\ricg(x_0)\neq 0$. As one checks, using  \eqref{eq:R:7}, as long as $n\geq 7$, we have that
\begin{equation}\label{def:alpha}
 \lim_{\delta\to 0}\frac{\Vert \rd\Vert_{\frac{2n}{n+2}}}{\delta^2}=\alpha^{-1}:=\left\Vert   h(x_0)U_1+\frac{1}{3}R_{ij} \sigma^i\sigma^j (r\partial_r U_1) \right\Vert_{L^{\frac{2n}{n+2}}(\rn)}>0.
 \end{equation}
It follows from the definition \eqref{def:B} that there exists $\mu_\delta\in\rr$ such that\
\begin{equation}\label{eq:B}
\Delta_g B_\delta+hB_\delta-(\crit-1)\frac{\ud^{\crit-2}}{d_g(\cdot,x_0)^s}B_\delta=-\rd+\mu_\delta(\Delta_gZ_\delta+hZ_\delta)
\end{equation}
Multiplying by $Z_\delta$, integrating, using that $B_\delta\perp Z_\delta$ and \eqref{bnd:B:R}, we get that
\begin{eqnarray*}
\mu_\delta\Vert Z_\delta\Vert_{h}^2&=&-(\crit-1)\int_M\frac{\ud^{\crit-2}}{d_g(\cdot,x_0)^s}B_\delta Z_\delta\, dv_g+\int_M R_\delta Z_\delta\, dv_g\\
&=&O\left(\Vert \ud\Vert_{H_1^2}^{\crit-2}\Vert B_\delta\Vert_{H_1^2}\Vert Z_\delta\Vert_{H_1^2}+\Vert \rd\Vert_{\frac{2n}{n+2}}\right)\\
&=& O\left( \Vert B_\delta\Vert_{H_1^2} +\Vert \rd\Vert_{\frac{2n}{n+2}}\right)=O\left( \Vert \rd\Vert_{\frac{2n}{n+2}} \right)
\end{eqnarray*}
Therefore, using \eqref{conzal}, we define $\beta:=\lim_{\delta\to 0}\frac{\mu_\delta}{\Vert \rd\Vert_{\frac{2n}{n+2}}}\in [0,+\infty)$. We define
$$\hat{B}_\delta(X):=\frac{\delta^{\frac{n-2}{2}}B_\delta(\hbox{exp}_{x_0}(\delta X))}{\Vert \rd\Vert_{\frac{2n}{n+2}}}\hbox{ for }X\in B(0, \delta^{-1}i_g(M)).$$
We define the metric $g_\delta:=\hbox{exp}_{x_0}^\star g(\delta\cdot)$ and fix $R>0$. W let $\eta\in C^\infty_c(\rn)$ be such that $\hbox{Supp }\eta\subset B(0,R)$. We have that
\begin{eqnarray*}
|\nabla_{g_\delta}(\eta\hat{B}_\delta)|^2_{g_\delta}(X)&=&\frac{\delta^n}{\Vert \rd\Vert_{\frac{2n}{n+2}}^2}\left|\nabla \left(\eta(\delta^{-1}\hbox{exp}_{x_0}^{-1})B_\delta\right)\right|_g^2(\hbox{exp}_{x_0}(\delta X))\\
&\leq & C\frac{\delta^n}{\Vert \rd\Vert_{\frac{2n}{n+2}}^2}\left(|\nabla B_\delta|^2 +\frac{1}{\delta^2}B_\delta^2\right)(\hbox{exp}_{x_0}(\delta X)){\bf 1}_{|X|<R} 
\end{eqnarray*}
Integrating on $\rn$ and using H\"older and Sobolev inequalities yield
\begin{eqnarray*}
&&\int_{\rn}|\nabla_{g_\delta}(\eta \hat{B}_\delta)|^2_{g_\delta}\, dX  \leq C\frac{\delta^n}{\Vert \rd\Vert_{\frac{2n}{n+2}}^2}\left(\int_{|X|<R}(|\nabla B_\delta|_g^2+\frac{1}{\delta^2} B_\delta^2)(\hbox{exp}_{x_0}(\delta X))\, dX\right)\\
&  &\leq C\frac{1}{\Vert \rd\Vert_{\frac{2n}{n+2}}^2}\left(\int_{B(x_0, R\delta)}(|\nabla B_\delta|_g^2+\frac{1}{\delta^2} B_\delta^2)\, dv_g\right)\\
& & \leq C\frac{1}{\Vert \rd\Vert_{\frac{2n}{n+2}}^2}\left(\int_{B(x_0, R\delta)}|\nabla B_\delta|_g^2\, dv_g+\left(\int_{B(x_0, R\delta)}|B_\delta|^{\frac{2n}{n-2}}\, dv_g\right)^{\frac{n-2}{n}}\right)\\
&& \leq C\frac{\Vert B_\delta\Vert_{H_1^2}^2}{\Vert \rd\Vert_{\frac{2n}{n+2}}^2}\leq C'
\end{eqnarray*}
for all $\delta<<1$. It then follows from reflexivity that $\eta \hat{B}_\delta$ has a weak limit in $\dundeux$ as $\delta\to 0$. A careful analysis yields the existence of $\hat{B}\in H_{1,loc}^2(\rn)$ such that for all $\eta\in C^\infty_c(\rn)$, then $\eta \hat{B}_\delta\rightharpoonup \eta\hat{B}$ weakly in $\dundeux$ as $\delta\to 0$. As one checks, the above computations and some change of variable yield $C>0$ such that 
$$\int_{B(0,R)}|\nabla \hat{B} |^2 \, dX+\int_{B(0,R)}| \hat{B} |^{\frac{2n}{n-2}} \, dX\leq C\hbox{ for all }R>0.$$
Using lower-semi-continuity and pointwise convergence, we get that $|\nabla\hat{B}|\in L^2(\rn)$ and $\hat{B}\in L^{\frac{2n}{n-2}}(\rn)$. As one checks, this yields $\hat{B}\in \dundeux$. Moreover, since $B_\delta\in K_\delta^\perp$, we have that $\l B_\delta,Z_\delta\r_h=0$. With a change of variable and taking $\delta\to 0$, we then get that $\hat{B}\perp Z_0$ in $\dundeux$, and then $\hat{B}\in K_0^\perp$ (see \eqref{spanZ_0}). With a change of variables, equation \eqref{eq:B} rewrites
\begin{eqnarray*}
&&\Delta_{g_\delta} \hat{B}_\delta+\hat{h}_\delta \hat{B}_\delta-(\crit-1)\frac{U_1^{\crit-2}}{|X|^s}\hat{B}_\delta\\
&&=-\frac{\delta^2}{\Vert \rd\Vert_{\frac{2n}{n+2}}}\delta^{\frac{n-2}{2}}\rd(\hbox{exp}_{x_0}(\delta X))+\frac{\mu_\delta}{\Vert \rd\Vert_{\frac{2n}{n+2}}} (\Delta_{g_\delta}Z_0+\hat{h}_\delta Z_0)
\end{eqnarray*}
on all compact subset of $\rn$ for $\delta<<1$, where $\hat{h}_\delta(X):=\delta^2 h(\hbox{exp}_{x_0}(\delta X))$.  Passing to the limit in the equation and using \eqref{exp:Rd} and \eqref{def:alpha} yield
\begin{equation*}
\Delta_{\xi} \hat{B} -(\crit-1)\frac{U^{\crit-2}}{|X|^s}\hat{B} =-\alpha \left(h(x_0)U+\frac{1}{3}R_{ij}(x_0)\sigma^i\sigma^j(r\partial_r U)\right)+\beta \Delta_{\xi}Z_0
\end{equation*}
in the weak sense. Moreover, it follows from standard elliptic theory that $\hat{B}_\delta\to \hat{B}$ in $C^0_{loc}(\rn)$. Since $\hat{B}\in K_0^\perp$, then, multiplying the above equation by $Z_0$ solution to \eqref{eq:Z0} and integrating on $\rn$ yields
$$\alpha \int_{\rn}\left(h(x_0)U+\frac{1}{3}R_{ij}\sigma^i\sigma^j(r\partial_r U)\right)Z_0\, dx=\beta \int_{\rn}|\nabla Z_0|^2\, dx.$$
We then get that for all $R>0$,
\begin{eqnarray*}
\int_{B(x_0,R\delta)}\rd B_\delta\, dv_g&=&\delta^2\int_{B( 0,R )} \delta^{\frac{n-2}{2}}\rd(\hbox{exp}_{x_0}(\delta X)) \delta^{\frac{n-2}{2}} B_\delta(\hbox{exp}_{x_0}(\delta X))\, dv_{g_\delta}
\end{eqnarray*}
and therefore we have that
$$\lim_{\delta\to 0}\frac{\int_{B(x_0,R\delta)}\rd B_\delta\, dv_g}{\delta^2 \Vert \rd\Vert_{\frac{2n}{n+2}}}=\int_{B( 0,R )} \left(h(x_0)U+\frac{1}{3}R_{ij}\sigma^i\sigma^j(r\partial_r U)\right)\hat{B}\, dX.$$
For $n\geq 7$, we get that
$$\lim_{R\to +\infty}\lim_{\delta\to 0}\delta^{-2}\Vert \rd\Vert_{L^{\frac{2n}{n+2}}(M-B(x_0,R\delta))}=0.$$
Noting that 
\begin{eqnarray*}
\int_{M-B(x_0,R\delta)}\rd B_\delta\, dv_g&=&O(\Vert \rd\Vert_{L^{\frac{2n}{n+2}}(M-B(x_0,R\delta))}\Vert B_\delta\Vert_{L^{\frac{2n}{n-2}}(M )})\\
&=&O(\Vert \rd\Vert_{L^{\frac{2n}{n+2}}(M-B(x_0,R\delta))}\Vert \rd\Vert_{L^{\frac{2n}{n+2}}})
\end{eqnarray*}
we get that
\begin{eqnarray*}
\lim_{\delta\to 0}\frac{\int_{M}\rd B_\delta\, dv_g}{\delta^4}=\alpha^{-1}\int_{\rn} \left(h(x_0)U+\frac{1}{3}R_{ij}\sigma^i\sigma^j(r\partial_r U)\right)\hat{B}\, dX.
\end{eqnarray*}
We define $$W_{a,\ricg(x_0)}:=aU+\frac{1}{3}R_{ij}(x_0)\sigma^i\sigma^j(r\partial_r U),$$
so that with \eqref{exp:energy}, we get that
\begin{eqnarray*}
J_h(\ud+\phi_\delta)&=& J_h(\ud)-\frac{\delta^4 }{2}  \int_{\rn} W_{h(x_0),\ricg(x_0)}\hat{C}(W_{h(x_0),\ricg(x_0)})\, dX +o(\delta^{4 })
\end{eqnarray*}
where for any $W\in L^{\frac{2n}{n+2}}(\rn)$, $\hat{C}=\hat{C}(W)\in \dundeux$ is the unique solution to 
\begin{equation*}
\Delta_{\xi} \hat{C} -(\crit-1)\frac{U_1^{\crit-2}}{|X|^s}\hat{C} =-W+\frac{ \int_{\rn}W Z_0\, dx}{ \int_{\rn}|\nabla Z_0|^2\, dx} \Delta_{\xi}Z_0
\,\, ;\, \hat{C}\in K_0^\perp.\end{equation*}
Note that the above expressions make also sense when $h(x_0)=0$ and $\ricg(x_0)=0$.

\medskip\noindent As one checks, we have that $h\mapsto \hat{C}(W_{h(x_0),\ricg(x_0)})$ is continuous from $C^0(M)\to \dundeux$. With Proposition \ref{pro:JWP}, we  get for $n\geq 7$ that 
\begin{eqnarray*}
J_h(\ud+\phi_\delta)&=& \frac{2-s}{2(n-s)}\int_{\rr^n} \frac{U_1^{\crit}}{|X|^s}\, dX+\frac{1}{2}\left(h-c_{n,s}\sg\right)(x_0)\dl^{2}\int_{\rr^n}U_1^2\, dx\\
			&&+L_g(h_0, x_0)\dl^4+o\left( \dl^4\right) +O(|h_0(x_0)-c_{n,s}\sg(x_0)|\delta^4)\\
			&&+O(\Vert h-h_0\Vert_{C^2}\delta^4)
\end{eqnarray*}
where $L_g(h,x_0)$ is defined in \eqref{def:Lg:intro}.  

\medskip\noindent We are in position to prove Theorems \ref{th:2} and \ref{th:1}. Let us assume in addition to our hypotheses that $h_0\in C^p(M)$, $p\geq 2$, and let us assume that
$$h_0(x_0)=c_{n,s}\sg(x_0).$$
We let $f\in C^p(M)$ and we set for $t>0$
$$\delta:=t\sqrt{\eps}\hbox{ and }h:=h_0+\eps f.$$
In particular $\lim_{\eps\to 0}L_g(h_0+\eps f, x_0)=L_g(h_0,x_0)$. Therefore, for all $t>0$, we get
\begin{eqnarray*}
&&\lim_{\eps\to 0}\frac{J_{h_0+\eps f}(U_{t\sqrt{\eps},x_0}+\phi_{t\sqrt{\eps}})- \frac{2-s}{2(n-s)}\int_{\rr^n} \frac{U_1^{\crit}}{|X|^s}\, dX}{\eps^2}\\
&&=\frac{1}{2}  f(x_0)t^2\int_{\rr^n}U_1^2\, dx+L_g(h_0, x_0)t^4,
\end{eqnarray*}
and this convergence is uniform on any compact subset of $(0,+\infty)$.

\smallskip\noindent If $L_g(h_0,x_0)\neq 0$, we assume that $f$ is such that $f(x_0)L_g(h_0,x_0)<0$. Then there exists $t_0\in (0,+\infty)$ that is a nondegenerate local extremum of the RHS. Therefore,  there exists $(t_\eps)_{\eps>0}>0$ such that $\lim_{\eps\to 0}t_\eps=t_0$ and $t_\eps$ is a critical point of $t\mapsto J_{h_0+\eps f}(U_{t\sqrt{\eps},x_0}+\phi_{t\sqrt{\eps}})$, therefore, it follows from Theorem \ref{maintheo1} that $u_\eps:=U_{t_\eps\sqrt{\eps},x_0}+\phi_{t_\eps\sqrt{\eps}}\in H_1^2(M)$ is a weak solution to
$$\Delta_gu_\eps+(h_0+\eps f)u_\eps=\frac{(u_\eps)_+^{\crit-1}}{d_g(x,x_0)^s}\hbox{ in }M.$$
Since $u_\eps-U_{t\sqrt{\eps}}\to 0$ in $H_1^2(M)$, we get that $u_\eps\not\equiv 0$ for $\eps>0$ small enough. Regularity yields (see Jaber \cite{JaberNATM}) that $u_\eps\in C^0(M)$. It then follows from the maximum principle ($\Delta_g+h_0$ is coercive) that $u_\eps>0$ in $M$. Therefore

$$\Delta_gu_\eps+(h_0+\eps f)u_\eps=\frac{u_\eps^{\crit-1}}{d_g(x,x_0)^s}\hbox{ in }M,\, u_\eps>0$$
and $$u_\eps=U_{t_\eps\sqrt{\eps},x_0}+o(1)\hbox{ in }H_1^2(M).$$
This proves Theorem \ref{th:1}. We now prove Theorem \ref{th:2}. If $L_g(h_0,x_0)\neq 0$, it is a consequence of Theorem \ref{th:1}.

\medskip\noindent We now assume that $L_g(h_0,x_0)= 0$. For $k\in\nn$, $k\geq 1$, we define $h_k(x):=h_0(x)+\frac{1}{k}d_g(x,x_0)^2\chi(x)$. As one checks, $h_k(x_0)=h_0(x_0)$ and $\Delta_gh_k(x_0)=\Delta_g h_0(x_0)-\frac{2n}{k}$. Therefore $L_g(h_k, x_0)=L_g(h_0, x_0)+\frac{1}{2k}\int_{\rr^n}|X|^4|\nabla U_1|^2\, dx=\frac{1}{2k}\int_{\rr^n}|X|^4|\nabla U_1|^2\, dx>0$. We impose that $f(x_0)<0$. It then follows from the first case that there there exists a family $u_{k,\eps}\in H_1^2(M)\cap C^0(M)$ such that
$$\Delta_gu_{k,\eps}+\left(h_0+\frac{1}{k}d_g(x,x_0)^2\chi+\eps f\right)u_{k,\eps}=\frac{u_{k,\eps}^{\crit-1}}{d_g(x,x_0)^s}\hbox{ in }M,\, u_{k,\eps}>0$$
and $$u_{k,\eps}=U_{t_{k,\eps}\sqrt{\eps}, x_0}+o(1)\hbox{ in }H_1^2(M)\hbox{ as }\eps\to 0.$$
Therefore, for all $k>1$, there exists $0<\eps_k<\frac{1}{k}$ such that
$$\left|\Vert \nabla u_{k,\eps_k}\Vert_2-\Vert \nabla U_1\Vert_2\right|<\frac{1}{k}\hbox{ and }\Vert u_{k,\eps_k}\Vert_2<\frac{1}{k}.$$
We set $\tilde{u}_k:=u_{k,\eps_k}$ and $\tilde{h}_k:=h_0+\frac{1}{k}d_g(x,x_0)^2\chi+\eps_k f$. We then get that
$$\Delta_g\tilde{u}_k+\tilde{h}_k \tilde{u}_k =\frac{\tilde{u}_k^{\crit-1}}{d_g(x,x_0)^s}\hbox{ in }M,\, \tilde{u}_k>0\hbox{ in }M.$$
As one checks, 
$$\lim_{k\to +\infty} \Vert \tilde{u}_k\Vert_2=0\hbox{ , }\lim_{k\to +\infty} \Vert \nabla\tilde{u}_k\Vert_2=\Vert \nabla U_1\Vert_2\hbox{ and }\lim_{k\to +\infty }\tilde{h}_k=h_0\hbox{ in }C^p(M).$$
So $\tilde{u}_k$ blows-up along a single bubble. This proves Theorem \ref{th:2}. 	

\section{Energy Estimate: the case $n\geq 7$}
We let $h_0\in C^2(M)$ be such that $\dg +h_0$ is coercive. We fix $C_0>0$. Here again, all the estimates are controls are valid for any $h\in C^2(M)$ such that \eqref{hyp:h} holds.
\subsection{Energy Estimates. } The expansion of the Riemannian element of volume is (see Lemma 5.5 page 60 in Lee-Parker \cite{LeeParkerAMS})\begin{eqnarray*}
	\det(g)(x)&=&1-\frac{1}{3}R_{pq} x^px^q-\frac{1}{6}R_{pq,k}x^px^qx^k\\
	&&\hspace{-0.5cm}-\left(\frac{1}{20}R_{pq,kl} 
	+\frac{1}{90}\sum_{i,m}R_{ipqm} R_{iklm}  -\frac{1}{18} R_{pq} R_{kl}  \right)x^px^qx^kx^l+O(r^5),\non
\end{eqnarray*}
where  the \( R_{pq} \) and \( R_{pqkl} \) denote the components of the Ricci tensor \( \ricg \) and the Riemann tensor \( \rmg \) at $x_0$ in the exponential chart. Moreover, we have set $R_{pq,k}=\left( \nabla\ricg \right)_{kpq}(x_0)$ and \(R_{pq,kl}=\left( \nabla^2\ricg (x_0)\right)_{klpq}\). We then get
\begin{eqnarray*}
\sqrt	{\det(g)(x)}&=&1-\frac{1}{6}R_{pq} x^px^q-\frac{1}{12}R_{pq,k} x^px^qx^k\\
	&&\hspace{-1cm}+\frac{1}{24}\left(-\frac{3}{5}R_{pq,kl} 
	-\frac{2}{15}\sum_{i,m}R_{ipqm}R_{iklm}  +\frac{1}{3}\sum_{i,m}R_{pq} R_{kl}  \right)x^px^qx^kx^l+O(r^5).\non
\end{eqnarray*}
We define
\begin{equation}\label{def:G}
	G(r):=\frac{1}{\omega_{n-1}}\int_{S^{n-1}}\sqrt{\det(g)}(r\sigma)\, d\sigma,
\end{equation}
where \( d\sigma \) denotes the volume element on \( \mathbb{S}^{n-1} \), and \( \omega_{n-1} \) denotes the volume of the unit sphere \( \mathbb{S}^{n-1} \).  Then
\begin{equation}\label{Gr}
	G(r)=1-\frac{\sg(x_0)}{6n}r^2+\fg(x_0)r^4+O(r^5),\non
\end{equation}
as $r\to 0$. Here, $\fg$ is defined as follows:
\begin{equation}\label{fgx0}
	\fg (x_0):=\frac{1}{360n(n+2)}\Big(18\dg \sg +8|\ricg |^2_g-3|\rmg |^2_g+5\sg^2 \Big)(x_0).
\end{equation}
\begin{prop}\label{pro:JWP} 
		For $n\geq 7$, we have  that 
			\begin{eqnarray*}
			&&	J_h(\ud)=\frac{2-s}{2(n-s)}\int_{\rr^n} \frac{U_1^{\crit}}{|X|^s}\, dX+\frac{1}{2}(h(x_0)-c_{n,s}\sg(x_0))\dl^{2}\int_{\rr^n}U_1^2\, dx\\
			&&-\frac{1}{4n}\left( \dg (h_0 -\ks\sg)(x_0)-\kg(x_0)\right) \dl^4\int_{\rr^n}|X|^4|\nabla U_1|^2\, dx+o\left( \dl^4\right)\\
			&& +O(|h_0 -c_{n,s}\sg|(x_0)\delta^4)+O(\Vert h-h_0\Vert_{C^2}\delta^4)
		\end{eqnarray*}
			as $\delta\to 0$ uniformly with respect to $h\in C^2(M)$ such that \eqref{hyp:h} holds, where $\kg(x_0)$ and $\ks$ are defined in \eqref{def:k}. 
 \end{prop}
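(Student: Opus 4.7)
The plan is to compute each of the three components of
\[ J_h(\ud) = \tfrac{1}{2}\int_M |\nabla \ud|_g^2\, dv_g + \tfrac{1}{2}\int_M h\,\ud^2\, dv_g - \tfrac{1}{\crit}\int_M \frac{\ud^{\crit}}{d_g(\cdot,x_0)^s}\, dv_g \]
separately in the exponential chart at $x_0$, rescale via $X=\delta^{-1}\exp_{x_0}^{-1}(x)$, and expand everything up to order $\delta^4$ using the volume-form expansion $G(r)$ from \eqref{Gr}--\eqref{fgx0}. The support of $\chi$ localizes every integral to $B(x_0, 2r_0)$, and since $\ud$ is radial in this chart while normal polar coordinates satisfy $g(\partial_r,\partial_r)\equiv 1$, the Dirichlet integral reduces to $\omega_{n-1}\int_0^{2r_0}(\partial_r(\chi U_\delta))^2 G(r) r^{n-1}\, dr$, modulo an $O(\delta^{n-2})=o(\delta^4)$ cutoff boundary term (here $n\geq 7$ is essential).

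For each of the three pieces I would Taylor-expand the relevant ingredients. For the mass term, I write $h = h_0 + (h - h_0)$, expand $h_0(\exp_{x_0}(r\sigma))$ to second order in $r$, and integrate angularly (using $\int_{S^{n-1}}\sigma^i\sigma^j\, d\sigma = \omega_{n-1}\delta^{ij}/n$) to replace $\partial_{ij}h_0(x_0)$ by a multiple of $\dg h_0(x_0)$; the $h-h_0$ contribution produces the error $O(\|h-h_0\|_{C^2}\delta^4)$. For the nonlinear term the same strategy applies, with the singular weight $|X|^{-s}$ causing no issue because $n \geq 7$ ensures convergence of every integral $\int |X|^{2k-s}U_1^{\crit}\, dX$ up to $k=2$. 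After the rescaling $r=\delta\rho$, each of the three pieces becomes an expansion in $\delta^0$, $\delta^2$, $\delta^4$ with coefficients given by explicit moment integrals of $U_1$ and its derivatives.

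I would then collect terms order by order. At order $\delta^0$, a Pohozaev-type identity based on the equation \eqref{eq:1} for $U_1$ yields $\tfrac{2-s}{2(n-s)}\int_{\rn}|X|^{-s}U_1^{\crit}\, dX$. At order $\delta^2$, the pure mass contribution $\tfrac{1}{2}h(x_0)\int U_1^2$ combines with the scalar-curvature terms arising from the $-\sg(x_0)/(6n)\cdot r^2$ part of $G$ that appears in both the kinetic and the nonlinear integrals; the constant $c_{n,s}$ then arises as the unique linear combination compatible with $\int U_1^2$, after integration by parts against \eqref{eq:1} to convert $\int |X|^2|\nabla U_1|^2$ and $\int |X|^2 U_1^{\crit}/|X|^s$ into that canonical quantity. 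At order $\delta^4$, the $\fg(x_0)r^4$ term in $G$, the $\tfrac12\partial_{ij}h_0(x_0)r^2\sigma^i\sigma^j$ term from the Taylor expansion of $h_0$, and the cross-terms between the $\delta^2$-level contributions all combine; a second round of integration by parts against \eqref{eq:1} converts the family of integrals $\int |X|^{a}U_1^b|X|^{-s}$ into the single canonical moment $\int |X|^4|\nabla U_1|^2\, dX$. The constant $\ks$ (distinct from $c_{n,s}$) and the tensorial combination defining $\kg(x_0)$ in \eqref{def:k} emerge as the unique grouping of these coefficients that is compatible with the formula for $\fg$ given by \eqref{fgx0}.

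The main obstacle is the bookkeeping at order $\delta^4$: one must carefully align contributions from $\fg$, from $\dg h_0(x_0)$, and from the quadratic cross-terms, then simplify via integration by parts using \eqref{eq:1} into the single integral $\int|X|^4|\nabla U_1|^2$, thereby identifying the correct coefficients $\ks$ and $\kg$. A secondary technical point is to verify that the contributions from the $O(r^5)$ remainder in $G(r)$, from the $O(r^3)$ remainder in the Taylor expansion of $h_0$, and from the region where $\chi'\neq 0$ are all $o(\delta^4)$ uniformly over $h\in C^2(M)$ satisfying \eqref{hyp:h}; this again relies on $n\geq 7$ for the integrability of the relevant moments.
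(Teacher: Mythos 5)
Your plan follows the paper's proof essentially step for step: the same three-way decomposition of $J_h(\ud)$, the same localization and expansion in normal coordinates against the averaged volume element $G(r)$ of \eqref{def:G}, the same Taylor expansion of $h$ written as $h_0+(h-h_0)$ with the second piece absorbed into $O(\Vert h-h_0\Vert_{C^2}\delta^4)$, and the same final reduction to canonical moments of $U_1$. One caveat on method: the ratios such as $\int|X|^2|\nabla U_1|^2/\int U_1^2$ are not obtained by integration by parts against \eqref{eq:1} alone --- Pohozaev-type manipulations give relations among the moments but not their values --- and the paper computes them explicitly from the closed form of $U_1$ via the beta-integrals $I_p^q$ of Jaber; you would need to do the same. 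Two further points deserve attention. First, $h_0$ is only assumed to be $C^2$, so the Taylor remainder is $o(|X|^2)$ (by continuity of the second derivatives), not $O(|X|^3)$; this still yields $o(\delta^4)$ after integration against $U_{\delta}^2$ by dominated convergence using $n\geq 7$, which is exactly how the paper argues, but your $O(r^3)$ claim implicitly assumes $C^3$ regularity. Second, your outline never accounts for the term $O(|h_0-c_{n,s}\sg|(x_0)\delta^4)$ appearing in the statement: it arises at the very end, when the raw $\delta^4$-coefficient $\tfrac{1}{3}\sg(x_0)h_0(x_0)$, produced by the cross-term between $h(x_0)U_{\delta}^2$ and the $-\tfrac{\sg(x_0)}{6n}r^2$ part of the volume element, is replaced by $\tfrac{1}{3}c_{n,s}\sg(x_0)^2$ in order to generate the $\sg(x_0)^2$ contribution inside $\kg(x_0)$; without this substitution the stated formula with $\kg$ as in \eqref{def:k} cannot be reached.
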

			\noindent{\it Proof of Proposition \ref{pro:JWP}:}  We   estimate each term of $J_h(\ud)$.

\begin{claim}\label{claim:nabla}
We claim as $\delta\to 0$ that for $n\geq 7$,
\begin{eqnarray}\label{estnabla}
	\int_{M}	|\nabla \ud|_g^2\, dv_g&=&\int_{\rr^n} |\nabla U_1|^2\,  dX	-\frac{\sg(x_0)}{6n}\dl^{2}\int_{\rr^n}|X|^2|\nabla U_1|^2\, dX\non\\
&&	+\fg(x_0) \dl^4\int_{\rr^n}|X|^4|\nabla U_1|^2\, dx+o(\dl^4)
\end{eqnarray}
where $U_1$ is defined in \eqref{def:U0}, and $\fg$ is defined in \eqref{fgx0}.
\end{claim}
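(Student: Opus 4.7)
The plan is to reduce the integral on $M$ to one over $\rn$ via the exponential chart at $x_0$, exploit the fact that $\ud$ is radial in normal coordinates so that the metric enters only through the volume element, and then use the radial expansion of $\sqrt{|g|}$ encoded by $G(r)$ of \eqref{def:G}.

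I would first dispose of the cutoff. Since $\chi\equiv 1$ on $[0,r_0]$ and $\chi\equiv 0$ outside $[0,2r_0]$, the explicit formula \eqref{def:Udx_0} gives $\ud=O(\delta^{(n-2)/2})$ and $|\nabla \ud|_g=O(\delta^{(n-2)/2})$ uniformly on the annulus $\{r_0\le d_g(\cdot,x_0)\le 2r_0\}$. The annular contribution to $\int_M|\nabla\ud|_g^2\,dv_g$ is therefore $O(\delta^{n-2})=o(\delta^4)$ for $n\ge 7$, and it suffices to work on $B(x_0,r_0)$, where $\ud\circ\exp_{x_0}=U_\delta$.

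Next, in normal coordinates at $x_0$, Gauss's lemma yields that the radial vector field is a $g$-unit, so for any radial function $u(|X|)$ one has $|\nabla u|_g=|u'(|X|)|=|\nabla u|_\xi$ pointwise. Consequently the metric enters only through $\sqrt{|g|}$, and polar integration produces
\begin{equation*}
\int_{B(x_0,r_0)}|\nabla\ud|_g^2\,dv_g \;=\; \omega_{n-1}\int_0^{r_0}(U_\delta'(r))^2\, r^{n-1}\, G(r)\,dr.
\end{equation*}
Inserting $G(r)=1-\frac{\sg(x_0)}{6n}r^2+\fg(x_0)r^4+O(r^5)$ splits this into three main terms, the $k$-th of the form $\omega_{n-1}\int_0^{r_0}(U_\delta'(r))^2 r^{n-1+2k}\,dr = \int_{B(0,r_0)}|\nabla U_\delta|^2|X|^{2k}\,dX$ for $k\in\{0,1,2\}$, plus an $O(r^5)$ remainder.

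Finally, rescaling $Y=X/\delta$ turns the $k$-th term into $\delta^{2k}\int_{B(0,r_0/\delta)}|\nabla U_1|^2|Y|^{2k}\,dY$. For $n\ge 7$ and $k\in\{0,1,2\}$, the integrand $|\nabla U_1|^2|Y|^{2k}$ has polar decay $r^{-n+1+2k}$, hence is integrable on $\rn$ (one needs $n>2+2k$, which holds in each case), and the discarded tail $\int_{|Y|>r_0/\delta}$ is of order $\delta^{n-2-2k}$, so after multiplication by $\delta^{2k}$ each tail is $O(\delta^{n-2})=o(\delta^4)$. A parallel computation gives for the $O(r^5)$ remainder an estimate $O(\delta^5)$ when $n>7$ and $O(\delta^5\log(1/\delta))$ when $n=7$, both $o(\delta^4)$. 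Summing the three main contributions produces \eqref{estnabla}.

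The main obstacle is really bookkeeping rather than conceptual: one must carefully track $\delta$-exponents and verify that every error term generated in the splitting is genuinely $o(\delta^4)$, which is exactly where the hypothesis $n\ge 7$ is consumed (to control the tail of the $|Y|^4$ moment and the $O(r^5)$ remainder). The underlying geometric simplification, via Gauss's lemma, is that the quadratic correction of the metric contributes nothing to the gradient term, so curvature information is carried entirely by $\sqrt{|g|}$; this is what keeps the computation tractable up to fourth order.
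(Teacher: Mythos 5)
Your argument is correct and follows essentially the same route as the paper: discard the annulus where the cutoff acts (contributing $O(\delta^{n-2})$), use that $\ud$ is radial in normal coordinates so the metric enters only through the volume element, integrate in polar coordinates against $G(r)$ from \eqref{def:G}, expand $G$, rescale, and check that all tails and the $O(r^5)$ remainder are $o(\delta^4)$ for $n\geq 7$. Your tail bookkeeping (convergence of the $|Y|^{2k}$ moments for $n>2+2k$, tails of order $\delta^{n-2}$, and the $\delta^5\log(1/\delta)$ remainder at $n=7$) is exactly the content the paper compresses into ``estimating the rest of the various integrals and using that $n\geq 7$.''
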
\par 
	\noindent{\it Proof of Claim \ref{claim:nabla}:} Since $U_1$ is radial, we get that $$\left|\nabla U_{\delta, x_0} \right|_g^2(\hbox{exp}_{x_0}(X))=\delta^{-n}|\nabla U_1|^2(\delta^{-1}X).$$
	Moreover, we have that 
\begin{equation*}
\int_{M\backslash B(x_0,r_0)} |\nabla \ud|^2=O\left( \dl^{n-2}\right) \bb{ as } \delta \to 0.
\end{equation*}
We then get that
\begin{eqnarray*}
\int_M|\nabla U_{\dl,x_0}|_g^2\, dv_g&=&\int_{B(x_0,r_0)}|\nabla U_{\dl, x_0}|_g^2\, dv_g+O(\delta^{n-2})\\
&=& \int_{B(0,r_0)}|\nabla U_{\dl,x_0}|_g^2(exp_{x_0}(X))\sqrt{|g|(X)}\, dX+O(\delta^{n-2})\\
&=&\omega_{n-1} \int_0^{r_0} \delta^{-n}|\nabla U_1|^2(\delta^{-1}r)r^{n-1}G(r)\, dr+O(\delta^{n-2})\\
&=&\omega_{n-1} \int_0^{r_0/\delta}|\nabla U_1|^2(r)r^{n-1}G(\delta r)\, dr+O(\delta^{n-2})
\end{eqnarray*}
where $G$ is as in \eqref{def:G}. With \eqref{Gr}, we then get
\begin{eqnarray*}
&&\int_M|\nabla U_{\dl,x_0}|_g^2\, dv_g=\int_{B(x_0,r_0)}|\nabla U_{\dl, x_0}|_g^2\, dv_g+O(\delta^{n-2})\\
&&=\omega_{n-1} \int_0^{r_0/\delta}|\nabla U_1|^2(r)r^{n-1}\left(1-\frac{\sg(x_0)}{6n}r^2\delta^2+\fg(x_0)r^4\delta^4+O(r^5\delta^5)\right)\, dr\\
&&+O(\delta^{n-2})
\end{eqnarray*}
Since $|\nabla U_1|(r)\leq C r^{1-n}$ for $r>1$, estimating the rest of the various integrals and using that $n\geq 7$, we get Claim \ref{claim:nabla}.\qed

\smallskip \noindent Similarly, as $\delta\to 0$, we get that
 \begin{eqnarray}\label{estudlcrit}
 	\int_M \frac{\ud^{\crit}}{d_g(x,x_0)^s}\, dv_g&=&\int_{\rr^n} \frac{U_1^{\crit}}{|X|^s}\, dX-\frac{\sg(x_0)}{6n}\dl^2\int_{\rr^n} |X|^{2-s}U_1^{\crit}\, dX\non\\
 	&&+\fg(x_0)\dl^4\int_{\rr^n}|X|^{4-s}U_1^{\crit}\, dx+o\left( \dl^4\right) 	\hbox{ for } n\geq 7.
 \end{eqnarray}
\begin{claim}\label{claim:hu2}
  	We claim  that for $n\geq 7$
  	\begin{eqnarray*}
  		\int_M h \ud^2\, dv_g
  		&=&h(x_0)\dl^2\int_{\rr^n}U_{1}^2\, dx\\
  		&& \hspace{-1cm}-\frac{1}{2n}\left( \dg h_0 (x_0)+\frac{1}{3}\sg(x_0)h_0 (x_0)\right)\dl^4	\int_{\rr^n}|X|^2U_1^2\, dX\\
		&&+o\left( \dl^4\right)+O\left( \Vert h-h_0\Vert_{C^2}\dl^4\right)
  	\end{eqnarray*}
  	as $\delta\to 0$ uniformly with respect to $h\in C^2(M)$ such that \eqref{hyp:h} holds.  \end{claim}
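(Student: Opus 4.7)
The plan is to follow the pattern of Claim \ref{claim:nabla}. First I split $M = B(x_0,r_0) \cup (M\setminus B(x_0,r_0))$; the outer piece contributes $O(\delta^{n-2})$ since $\ud = O(\delta^{(n-2)/2})$ uniformly there by \eqref{def:Udx_0} and $h$ is uniformly bounded, and $\delta^{n-2} = o(\delta^4)$ for $n\geq 7$. On the inner ball I work in Riemannian normal coordinates at $x_0$ and apply the rescaling $X = \delta Y$, reducing the problem to
\begin{equation*}
\delta^2 \int_{B(0,r_0/\delta)} h(\eo(\delta Y))\, U_1^2(Y)\, \sqrt{|g|}(\delta Y)\, dY.
\end{equation*}

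The two smooth factors are then Taylor-expanded. The Lee-Parker expansion recalled at the start of this section gives $\sqrt{|g|}(\delta Y) = 1 - \tfrac{\delta^2}{6}R_{pq}(x_0)Y^pY^q + O(\delta^3|Y|^3)$, while for $h\in C^2(M)$, Taylor's theorem yields $h(\eo(\delta Y)) = h(x_0) + \delta(\partial_i h)(x_0) Y^i + \tfrac{\delta^2}{2}(\partial_{ij}h)(x_0)Y^i Y^j + \rho(\delta Y)$ with $|\rho(X)| \leq C|X|^2$ uniformly and $\rho(X) = o(|X|^2)$ pointwise as $X\to 0$. Multiplying the two expansions and invoking the radial symmetry of $U_1$ to kill all odd monomials in $Y$, only the constant and two quadratic corrections survive. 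The identity $\int_{\rn} f(|Y|)Y^iY^j\, dY = n^{-1}\delta^{ij}\int_{\rn}f(|Y|)|Y|^2\, dY$ converts $(\partial_{ij}h)(x_0)Y^iY^j$ into $-n^{-1}\dg h(x_0)|Y|^2$ (using the normal-coordinate identity $(\partial_{ii}h)(x_0) = -\dg h(x_0)$) and $R_{pq}(x_0)Y^pY^q$ into $n^{-1}\sg(x_0)|Y|^2$. Collecting, the $\delta^4$ coefficient equals $-\tfrac{1}{2n}\bigl(\dg h(x_0) + \tfrac{1}{3}\sg(x_0) h(x_0)\bigr)\int_{\rn}|Y|^2 U_1^2\, dY$, exactly as announced.

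It remains to show that all other contributions are $o(\delta^4)$ and to replace $h$ by $h_0$. The decay $U_1(Y)\sim \k|Y|^{-(n-2)}$ gives $|Y|^2 U_1^2\in L^1(\rn)$ exactly when $n\geq 7$, so extending the quadratic-$Y$ integrals from $B(0,r_0/\delta)$ to $\rn$ produces only an $o(1)$-in-$\delta$ tail. The higher-order $\delta^k$ terms in the Taylor product ($k\geq 3$) are either odd in $Y$ (vanishing by symmetry) or contribute at most $\delta^{k+2}\int_{B(0,r_0/\delta)}|Y|^k U_1^2\, dY$; even when this integral grows polynomially or logarithmically in $\delta^{-1}$, the overall order is $o(\delta^4)$ for $n\geq 7$. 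Replacing $h(x_0)$ and $\dg h(x_0)$ by $h_0(x_0)$ and $\dg h_0(x_0)$ in the $\delta^4$-coefficient costs exactly the announced $O(\Vert h-h_0\Vert_{C^2}\delta^4)$. The main technical point is the $h$-Taylor remainder: since $h$ is only $C^2$, $\rho$ is pointwise $o(|X|^2)$ but only uniformly $O(|X|^2)$, so $\delta^2\int\rho(\delta Y)U_1^2\, dY$ must be controlled by splitting at $|\delta Y|=\eta$ --- bounding the inner part via the pointwise $o$-estimate together with $|Y|^2U_1^2 \in L^1(\rn)$, and the outer part via the uniform $C^2$-bound together with $\int_{|Y|\geq\eta/\delta}|Y|^2U_1^2\, dY\to 0$ --- and then sending $\eta\to 0$ after $\delta\to 0$.
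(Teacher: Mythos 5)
Your proposal follows the paper's proof essentially step by step: cut off the exterior region (which is $O(\delta^{n-2})=o(\delta^4)$), pass to normal coordinates, expand $\sqrt{|g|}$ via Lee--Parker and $h$ via Taylor, kill the odd terms by the radial symmetry of $U_1$, convert the quadratic forms to traces via $\int f(|Y|)Y^iY^jdY=n^{-1}\delta^{ij}\int f(|Y|)|Y|^2dY$, and use $|Y|^2U_1^2\in L^1(\rn)$ for $n\geq 7$ to control the tails. All the coefficients you obtain are correct and match the claim.

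The one point where your argument, as written, does not quite deliver the stated conclusion is the uniformity in $h$. Your second-order Taylor remainder $\rho$ is that of $h$ itself, and the pointwise estimate $\rho(X)=o(|X|^2)$ as $X\to 0$ depends on the modulus of continuity of the second derivatives of $h$, which is \emph{not} controlled by the hypothesis \eqref{hyp:h} ($\Vert h\Vert_{C^2}\leq C_0$ bounds the second derivatives but not their oscillation). Hence the $o(\delta^4)$ produced by your $\eta$-splitting is a priori $h$-dependent, whereas the claim asserts uniformity over the class \eqref{hyp:h}. The paper circumvents this by Taylor-expanding the second-order part around the \emph{fixed} function $h_0$: it writes $\hat h(X)=\hat h(0)+X^i\partial_i\hat h(0)+\tfrac12X^iX^j\partial_{ij}\hat h_0(0)+O\bigl(|X|^2\sup_{t\in[0,1]}|\partial_{ij}\hat h_0(tX)-\partial_{ij}\hat h_0(0)|\bigr)+O\bigl(|X|^2\Vert h-h_0\Vert_{C^2}\bigr)$, so the only modulus of continuity entering the $o(\delta^4)$ is that of $\partial_{ij}\hat h_0$, which is fixed, while the $h$-dependence is absorbed into the explicit $O(\Vert h-h_0\Vert_{C^2}\delta^4)$. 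Your final replacement step only swaps the coefficients $h(x_0)$ and $\dg h(x_0)$ for $h_0(x_0)$ and $\dg h_0(x_0)$ and does not repair this. The fix is exactly your own $\eta$-splitting applied to the remainder $\rho_0$ of $h_0$, after decomposing $\rho=\rho_0+(\rho-\rho_0)$ with $|\rho-\rho_0|\leq C|X|^2\Vert h-h_0\Vert_{C^2}$.
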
 
 	\noindent{\it Proof of Claim \ref{claim:hu2}:} We define $\hat{h}(X):=h(\exp_{x_0} (X))$ and $\hat{h}_0(X):=h_0(\exp_{x_0} (X))$ for $X\in B(0, i_g(M))$. As in the previous claim, using the explicit expression of $U_\delta$ and $n\geq 7$, we get that
\begin{eqnarray*}
 	&&\int_M h\ud^2\, dv_g=\int_{B(x_0,r_0)} h\ud^2\, dv_g+O(\delta^{n-2})\\
 &=&\int_{B(0,r_0)} \hat{h}(X)U_{\dl}^2\sqrt{|g|}\, dX +O(\delta^{n-2})\\
&=&\int_{B(0,r_0)} \hat{h}(X)U_{\dl}^2 \, dX + \int_{B(0,r_0)} \hat{h}(x)U_{\dl}^2(\sqrt{|g|}-1)\, dX +O(\delta^{n-2})\\
 &=&\int_{B(0,r_0)} \hat{h}(X)U_{\dl}^2 \, dX + \hat{h}(0)\int_{B(0,r_0)}  U_{\dl}^2(\sqrt{|g|}-1)\, dX \\
 &&+\int_{B(0,r_0)} O( |X|^3) U_{\dl}^2 \, dX+O(\delta^{n-2})\\
 &=&\int_{B(0,r_0)} \hat{h}(X)U_{\dl}^2 \, dX + \hat{h}(0)\int_{B(0,r_0)}  U_{\dl}^2(\sqrt{|g|}-1)\, dX +o(\delta^{4})
 \end{eqnarray*}
 Arguing again as in the previous claim, we get that
 \begin{eqnarray*}
&& \hat{h}(0)\int_{B(0,r_0)}  U_{\dl}^2(\sqrt{|g|}-1)\, dX \\
&&=h(x_0)\omega_{n-1}\int_0^{r_0}  U_{\dl}^2(r)\left(-\frac{1}{6n}\sg(x_0)\delta^2 r^2+O(\delta^4r^4)\right)r^{n-1}\, dr \\
 &&= -\frac{h(x_0)}{6n}\sg(x_0)\delta^4\int_{\rn}|X|^2U_1^2\, dX+o(\delta^4)\\
 &&= -\frac{h_0(x_0)}{6n}\sg(x_0)\delta^4\int_{\rn}|X|^2U_1^2\, dX+o(\delta^4)+O\left( \Vert h-h_0\Vert_{C^2}\dl^4\right)\\
 \end{eqnarray*}
 With the Taylor expansion of $\hat{h}$ at $0$, for $X\in B(0,r_0)$, we write
\begin{eqnarray*}
\hat{h}(X)&=&\hat{h}(0)+X^i\partial_i\hat{h}(0)+\int_0^1X^iX^j(1-t)\partial_{ij}\hat{h}(tX)\, dt\\
&=& \hat{h}(0)+X^i\partial_i\hat{h}(0)+\frac{1}{2}X^iX^j\partial_{ij}\hat{h}_0(0)\\
&&+\int_0^1X^iX^j(1-t)(\partial_{ij}\hat{h}_0(tX)-\partial_{ij}\hat{h}_0(0))\, dt\\
&&+\int_0^1X^iX^j(1-t)(\partial_{ij}\hat{h}  -\partial_{ij}\hat{h}_0 )(tX)\, dt\\
&=& \hat{h}(0)+X^i\partial_i\hat{h}(0)+\frac{1}{2}X^iX^j\partial_{ij}\hat{h}_0(0)\\
&&+O\left(|X|^2\sup_{t\in [0,1]}|\partial_{ij}\hat{h}_0(tX)-\partial_{ij}\hat{h}_0(0)|\right)+O\left(|X|^2 \Vert h-h_0\Vert_{C^2}\right)
\end{eqnarray*}
Using the radial symmetry of $U_\delta$ and the continuity of $\partial_{ij}\hat{h}_0$, we get that 
\begin{eqnarray*}
&&\int_{B(0,r_0)} \hat{h}(X)U_{\dl}^2 \, dX\\
&&=\int_{B(0,r_0)} \left(\hat{h}(0)+x^i\partial_i\hat{h}(0)+\frac{1}{2}x^i x^j\partial_{ij}\hat{h}_0(0) \right)U_{\dl}^2 \, dX +o(\delta^4) \\
&& \qquad  +O(\Vert h-h_0\Vert_{C^2}\delta^4)\\
 &&=h(x_0)\delta^2\int_{B(0,r_0/\delta)} U_{1}^2 \, dX + \frac{1}{2} \partial_{ij}\hat{h}_0(0)\delta^4\int_{B(0,r_0/\delta)} x^i x^jU_{1}^2 \, dX\\
 &&\qquad+o(\delta^4)+O(\Vert h-h_0\Vert_{C^2}\delta^4)\\
 &&=h(x_0)\delta^2\int_{B(0,r_0/\delta)} U_{1}^2 \, dX + \frac{\sum_i\partial_{ii}\hat{h}_0(0)}{2}  \delta^4\int_{B(0,r_0/\delta)} |x|^2U_{1}^2 \, dX\\
 &&\qquad+o(\delta^4)+O(\Vert h-h_0\Vert_{C^2}\delta^4)\\
&&=h(x_0)\delta^2\int_{\rn} U_{1}^2 \, dX-  \frac{\Delta_g h_0(x_0)}{2}  \delta^4\int_{\rn} |x|^2U_{1}^2 \, dX+o(\delta^4)+O(\Vert h-h_0\Vert_{C^2}\delta^4)
\end{eqnarray*}
where we have used again the asymptotic behavior of $U_1$ at infinity and $n\geq 7$. Plugging together these expansions yields Claim \ref{claim:hu2}. \qed \par

\medskip \noindent{\bf End the proof of Proposition \ref{pro:JWP}:} For $n\geq 7$, if follows from \eqref{estnabla}, Claim \ref{claim:hu2} and \eqref{estudlcrit} that 
 	\begin{eqnarray}\label{jhud}
 &&	J_h( \ud) \non\\
 	&=&\frac{1}{2}\int_M\left( |\nabla \ud|_g^2+h\ud^2\right)\, dv_g-\frac{1}{\crit} \int_M \frac{\ud^{\crit}}{d_g(x,x_0)^s} \, dv_g\non\\
&=&\frac{1}{2} \int_{\rr^n} |\nabla U_1|^2\,  dX-\frac{1}{\crit}\int_{\rr^n} \frac{U_1^{\crit}}{|X|^s}\, dX	\\
&&+\frac{1}{2}\left( h(x_0)-\frac{\sg(x_0)}{6n}\frac{\int_{\rr^n}|X|^2|\nabla U_1|^2\, dX}{\int_{\rr^n}U_1^2\, dx} \right.\non\\
&&\left.+\frac{2\sg(x_0)}{\crit6n }\frac{\int_{\rr^n}|X|^{2-s}U_1^{\crit}\, dX}{\int_{\rr^n}U_1^2\, dx}\right) \dl^{2}\int_{\rr^n}U_1^2\, dx\non\\
&&-\frac{1}{4n}\left( \dg h_0(x_0)+\frac{1}{3}\sg(x_0)h_0(x_0)-2n\fg(x_0)\frac{\int_{\rr^n}|X|^4|\nabla U_1|^2\, dX}{\int_{\rr^n}|X|^2U_1^2\, dx}\right. \non\\
&&\left. +\frac{4n}{\crit}\fg(x_0)\frac{\int_{\rr^n}|X|^{4-s}U_1^{\crit}\, dX}{\int_{\rr^n}|X|^2U_1^2\, dx}\right) \dl^4\int_{\rr^n}|X|^2U_1^2\, dx \non\\
&&+o\left( \dl^4\right)+O(\Vert h-h_0\Vert_{C^2}\delta^4) \non
 	\end{eqnarray}
 		as $\delta\to 0$ uniformly with respect to $h\in C^2(M)$ such that \eqref{hyp:h} holds.
 Using again the result of Jaber \cite{JaberNATM}, we obtain that 
 	\begin{equation*}
 		\frac{\int_{\rr^n}|X|^2|\nabla U_1|^2\, dX}{\int_{\rr^n}U_1^2\, dX}=\frac{n(n-2)(n+2-s)}{2(2n-2-s)}
\end{equation*}		
		\begin{equation*}
		 \bb{ and }\frac{\int_{\rr^n}|X|^{2-s}U_1^{\crit}\, dX}{\int_{\rr^n}U_1^2\, dx}=\frac{\k ^{\crit-2}n(n-4)}{2(n-2)(2n-2-s)}.
 	\end{equation*}
This implies that 
 	\begin{equation}\label{fraction0}
 \frac{\int_{\rr^n}|X|^2|\nabla U_1|^2\, dX}{\int_{\rr^n}U_1^2\, dX}-	\frac{2}{\crit }\frac{\int_{\rr^n}|X|^{2-s}U_1^{\crit}\, dX}{\int_{\rr^n}U_1^2\, dx}=6n c_{n,s},
 	\end{equation}
 	where $c_{n,s}$ is defined in \eqref{def:c}. For $p,q\in \rr_+^{*}$ such that $p-q>1$, we set $I_p^q=\int_0^{+\infty}\frac{t^q}{(1+t)^p}\, dt$. From Jaber \cite{JaberNATM}, we have then that
 	\begin{equation*}
 		I_{p+1}^q=\frac{p-q-1}{p}I_p^q \bb{ and } I_{p+1}^{q+1}=\frac{q+1}{p-q-1}I_{p+1}^q=\frac{q+1}{p}I_p^q.
 	\end{equation*}
Using a change of variables, we can compute that  
 \begin{eqnarray*}
 	\int_{\rr^n} |X|^2 U_1^2\, dX&=&w_{n-1}\k^2\int_{0}^{+\infty} \frac{r^{n+1}}{\left(1+r^{2-s} \right)^{\frac{2(n-2)}{2-s}} }\, dr\non\\
 &=&\frac{w_{n-1}}{2-s}\k^2\int_{0}^{+\infty}\frac{t^{\frac{n+s}{2-s}}}{\left( 1+t\right)^{\frac{2(n-2)}{2-s}}  }\, dt=\frac{w_{n-1}}{2-s}\k^2I_{p}^{q},
 \end{eqnarray*}
\begin{equation*}
 	\int_{\rr^n} |X|^4|\nabla U_1|^2\, dX
 	=\frac{w_{n-1}}{2-s}(n-2)^2\k^2I_{p+2}^{q+2}.
 \end{equation*}
 when we have taken $p=\frac{2(n-2)}{2-s}$ and $q=\frac{n+s}{2-s}$. We then get 
 \begin{equation*}
 	\frac{\int_{\rr^n} |X|^4|\nabla U_1|^2\, dX}{\int_{\rr^n} |X|^2 U_1^2\, dX}=(n-2)^2\frac{I_{p+2}^{q+2}}{I_{p}^{q}}
 	=\frac{(n-2)(n+2)(n+4-s)}{2(2n-2-s)}.
 \end{equation*}
Similarly, we   obtain
 \begin{equation*}
 	\frac{\int_{\rr^n}|X|^{4-s}U_1^{\crit}\, dX}{\int_{\rr^n}|X|^2U_1^2\, dx}=\frac{(n-s)(n-6)(n+2)}{2(2n-2-s)},
 \end{equation*}
so that \begin{eqnarray*}
 	\frac{\int_{\rr^n}|X|^4|\nabla U_1|^2\, dX}{\int_{\rr^n}|X|^2U_1^2\, dx}-\frac{2}{\crit}\frac{\int_{\rr^n}|X|^{4-s} U_1^{\crit}\, dX}{\int_{\rr^n}|X|^2U_1^2\, dx}=\frac{(n+2)(n-2)(10-s)}{2(2n-2-s)}.
 \end{eqnarray*}
 Putting these identities together in \eqref{jhud}, we obtain as $\dl\to 0$ that 
 	 	\begin{eqnarray*}
 		&&	J_h(\ud)=\\
 		&&\frac{2-s}{2(n-s)}\int_{\rr^n} \frac{U_1^{\crit}}{|X|^s}\, dX	+\frac{1}{2}\left( h(x_0)-c_{n,s}\sg(x_0)\right) \dl^{2}\int_{\rr^n}U_1^2\, dx\non\\
 		&&-\frac{1}{4n}\left(\dg h_0 (x_0)+\frac{1}{3}\sg(x_0)h_0 (x_0)\right. \\
		&&\left.-\fg(x_0)\frac{n(n+2)(n-2)(10-s)}{2n-2-s}\right)\dl^4\int_{\rr^n}|X|^2U_1^2\, dx\non\\
		&&+o\left( \dl^4\right) +O(\Vert h-h_0\Vert_{C^2}\delta^4) ,\non
 	\end{eqnarray*}
 	where we have used that $U_1$ verifies \eqref{eq:1}. It follows now from \eqref{fgx0} that 
 	\begin{eqnarray*}
 		&&\frac{1}{3}\sg(x_0)(c_{n,s}\sg(x_0))-\fg(x_0)\frac{n(n+2)(n-2)(10-s)}{(2n-2-s)}\non\\
 		&&=-\ks \dg \sg(x_0)\\
		&&-\frac{\ks}{18}\Big(8|\ricg(x_0)|^2_g-3|\rmg(x_0)|^2_g -\frac{5(2-s)}{10-s}\sg(x_0)^2\Big).
 	\end{eqnarray*}Injecting these last value in the estimate of $J_h(\ud)$ above, we get Proposition \ref{pro:JWP} for $n\geq 7$.\par

\end{document}